\DeclareMathAlphabet{\mathcal}{OMS}{cmsy}{m}{n}
\theoremstyle{plain}
\newtheorem{theorem}{Theorem}[section]
\newtheorem{corollary}[theorem]{Corollary}
\theoremstyle{remark}
\newtheorem{remark}[theorem]{Remark}
\theoremstyle{definition}
\newenvironment{proof of theorem 1.1}{{\noindent \em Proof of Theorem 1.1.}}{\hfill $\Box$\par}
\newenvironment{proof of theorem 1.2}{{\noindent \em Proof of Theorem 1.2.}}{\hfill $\Box$\par}
\DeclareSymbolFont{EulerExtension}{U}{euex}{m}{n}
\DeclareMathSymbol{\euintop}{\mathop} {EulerExtension}{"52}
\DeclareMathSymbol{\euointop}{\mathop} {EulerExtension}{"48}
\begin{document}
	\title{Erd\'{e}lyi-type integrals for $F_K$ function and their $q$-analogues}
	\author{
		Liang-Jia Guo \orcidlink{0000-0002-1146-5896} $^{\rm 1}$,
		Min-Jie Luo \orcidlink{0000-0001-7433-4490} $^{\rm 2}$\thanks{Corresponding author \\ E-mail address: \texttt{mathwinnie@live.com} (Min-Jie Luo).}}
	\date{}
	\maketitle
	\begin{center}\small 
		$^{1}$\emph{School of Mathematics and Statistics, \\ Donghua University, Shanghai 201620, \\ 
			People's Republic of China.}\\
		E-mail: \texttt{guoliangjia77@outlook.com}
	\end{center}
	\begin{center}\small
		$^{2}$\emph{School of Mathematics and Statistics, \\ Donghua University, Shanghai 201620, \\ 
			People's Republic of China.}\\
		E-mail: \texttt{mathwinnie@live.com}, \texttt{mathwinnie@dhu.edu.cn}
	\end{center}
	
	
	\begin{abstract}
		
		In this paper, we revisit the recent result of Luo, Xu, and Raina [Fractal Fract. 6 (3) (2022)] on an Erd\'{e}lyi-type integral for Saran's three-variable hypergeometric function $F_K$. 
		We provide a new proof of this integral and derive an attractive new integral related to Appell's function $F_2$. A further extension on the $L$-variable $F_K$ function,  which appears in physics, is also discussed. Furthermore, we prove various $q$-Erd\'{e}lyi-type integrals for the $q$-analogue of the $F_K$-function. An interesting discrete analogue is also included. We also provide a valuable compilation of the sources for known Erd\'{e}lyi-type integrals of many different hypergeometric functions in the Appendix. \\
		
		\noindent\textbf{Keywords}: 
		Erd\'{e}lyi-type integral, 
		fractional integration by parts,  
		hypergeometric function,  
		$q$-analogue, 
		Saran's function.
		\\
		
		\noindent\textbf{Mathematics Subject Classification (2020)}:
		33C05, 
		33C65, 
		33C70, 
		33D15, 
		33D70. 
	\end{abstract}	
	
\tableofcontents 	

\section{Introduction}\label{Introduction}

\subsection{Background and motivation}

The Gauss hypergeometric function ${}_{2}F_{1}$ is defined by 
\[
{}_{2}F_{1}\left[\begin{matrix}
	\alpha,\beta\\
	\gamma
\end{matrix};z\right]:=\sum_{n=0}^{\infty}\frac{(\alpha)_n(\beta)_n}{(\gamma)_n}\frac{z^n}{n!}~~~(|z|<1),
\]
where $\alpha,\beta\in\mathbb{C}$ and $\gamma\notin\mathbb{Z}_{\leq0}:=\{0,-1,-2,\cdots\}$ (see \cite[p. 56]{Higher Transcendental Function Vol 1} and \cite[p. 384]{NIST Handbook}). It was shown by Euler that \cite[p. 59, Eq. (10)]{Higher Transcendental Function Vol 1}
\begin{numcases}{{}_{2}F_{1}\left[\begin{matrix}
			\alpha,\beta\\
			\gamma
		\end{matrix};z\right]=}
	\int_{0}^{1} (1-zt)^{-\alpha}\mathrm{d}\mu_{\beta,\gamma-\beta}(t)
	~~~(\Re(\gamma)>\Re(\beta)>0),  \label{EulerIntegral-1}\\
	\int_{0}^{1} (1-zt)^{-\beta}\mathrm{d}\mu_{\alpha,\gamma-\alpha}(t)
	~~~(\Re(\gamma)>\Re(\alpha)>0). \label{EulerIntegral-2}
\end{numcases}
Here and throughout the paper, we let  $\mu_{\alpha,\beta}$ be the \emph{Dirichlet measure} defined by \cite[p. 64, Definition 4.4-1]{Carlson-Book}
\begin{equation}\label{DirichletMeasure}
	\mathrm{d}\mu_{\alpha,\beta}(t)
	:=\mathfrak{m}_{\alpha,\beta}(t)\mathrm{d}t,
\end{equation}
where
\[
\mathfrak{m}_{\alpha,\beta}(t):=
\frac{\Gamma(\alpha+\beta)}{\Gamma(\alpha)\Gamma(\beta)}t^{\alpha-1}\left(1-t\right)^{\beta-1}~~~(\min\{\Re(\alpha),\Re(\beta)\}>0).
\]
Obviously, $\displaystyle\int_{0}^{1}\mathrm{d}\mu_{\alpha,\beta}(t)=1$. In general, integrals \eqref{EulerIntegral-1} and \eqref{EulerIntegral-2} \emph{cannot} be directly transformed into each other, but Riemann \cite{Riemann-1857} indicated that both could be derived from a multiple integral. According to the existing literature, Schellenberg \cite{Schellenberg-1892}, Wirtinger \cite{Wirtinger-1902}, Poole \cite{Poole-1938} and Erd\'{e}lyi \cite{Erdelyi-1937} studied this problem. We are particularly interested in Erd\'{e}lyi's and Poole's methods. 

Erd\'{e}lyi's method (see \cite[p. 272--273]{Erdelyi-1937}) is to substitute
\[
(1-zt)^{-\alpha}=\int_{0}^{1}(1-zts)^{-\gamma}\mathrm{d}\mu_{\alpha,\gamma-\alpha}(s)~~~(\Re(\gamma)>\Re(\alpha)>0)
\]
into the first integral \eqref{EulerIntegral-1} to obtain the double integral
\begin{equation}\label{Euler-MellinIntegral-1}
\int_{0}^{1}\int_{0}^{1}
(1-zts)^{-\gamma}\mathrm{d}\mu_{\beta,\gamma-\beta}(t)
\mathrm{d}\mu_{\alpha,\gamma-\alpha}(s), 
\end{equation}
which is symmetric in $\alpha$ and $\beta$. If we integrate with respect to $t$ first, then we obtain the second integral \eqref{EulerIntegral-2}. Note that after introducing the new variables $x_1=s/(1-s)$ and $x_2=t/(1-t)$ and making the substitution $z\rightarrow 1-z$ in Erd\'{e}lyi's formula \eqref{Euler-MellinIntegral-1}, we obtain the double integral
\begin{equation}\label{Euler-MellinIntegral-2}
\int_0^{\infty}\int_{0}^{\infty}\frac{x_1^{\alpha} x_2^{\beta}}{(1+x_1+x_2+zx_1x_2)^\gamma}\frac{\mathrm{d}x_1\mathrm{d}x_2}{x_1x_2}=
\frac{\Gamma(\beta)\Gamma(\gamma-\beta)\Gamma(\alpha)\Gamma(\gamma-\alpha)}{(\Gamma(\gamma))^2}
{}_{2}F_{1}\left[\begin{matrix}
	\alpha,\beta\\
	\gamma
\end{matrix};1-z\right].
\end{equation}
The integral on the left-hand side of \eqref{Euler-MellinIntegral-2} is also known as the \emph{Euler-Mellin integral} (see \cite{Berkesch-Forsgard-Passare-2014} and \cite{Tellander-2024}).

Poole's method \cite{Poole-1938} is to apply the integration by parts to the contour integral representation of ${}_{2}F_{1}$, which is very delightful. After reading Poole's paper, Erd\'{e}lyi \cite{Erdelyi-1939a} realized that Poole's method can be generalized by using the fractional derivatives. By inventing the method of \emph{fractional integration by parts}, Erd\'{e}lyi not only gave a simple description of Poole's derivation but also reproduced the Bateman integral
\begin{equation}\label{BatemanIntegral}
{}_{2}F_{1}\left[\begin{matrix}
	\alpha,\beta\\
	\gamma
\end{matrix};z\right]
=\int_{0}^{1}
{}_{2}F_{1}\left[\begin{matrix}
	\alpha, \beta\\
	\lambda
\end{matrix}; zx\right]\mathrm{d}\mu_{\lambda,\gamma-\lambda}(x) \hspace{0.5cm}(\Re(\gamma)>\Re(\lambda)>0).
\end{equation}
In \cite{Erdelyi-1939b}, Erd\'{e}lyi further developed his method and proved the following important integrals for  ${}_{2}F_{1}$: 
\begin{align}
	{}_{2}F_{1}\left[\begin{matrix}
		\alpha,\beta\\
		\gamma
	\end{matrix};z\right]
	&=\int_{0}^{1}\left(1-zx\right)^{-\alpha'}
	{}_{2}F_{1}\left[\begin{matrix}
		\alpha-\alpha', \beta\\
		\lambda
	\end{matrix}; zx\right]\notag\\
	&\hspace{0.5cm}\cdot {}_{2}F_{1}\left[\begin{matrix}
		\alpha', \beta-\lambda\\
		\gamma-\lambda
	\end{matrix}; \frac{(1-x)z}{1-xz}
	\right]\mathrm{d}\mu_{\lambda,\gamma-\lambda}(x) \hspace{0.5cm}(\Re(\gamma)>\Re(\lambda)>0),
	\label{ErdelyiIntegral-1}\\
	{}_{2}F_{1}\left[\begin{matrix}
		\alpha,\beta\\
		\gamma
	\end{matrix};z\right]
	&=\int_{0}^{1}\left(1-zx\right)^{\lambda-\alpha-\beta}
	{}_{2}F_{1}\left[\begin{matrix}
		\lambda-\alpha, \lambda-\beta\\
		\eta
	\end{matrix}; zx\right]\notag\\
	&\hspace{0.5cm}\cdot {}_{2}F_{1}\left[\begin{matrix}
		\alpha+\beta-\lambda, \lambda-\eta\\
		\gamma-\eta 
	\end{matrix}; \frac{(1-x)z}{1-xz}
	\right]\mathrm{d}\mu_{\eta,\gamma-\eta}(x) 
	\hspace{0.5cm}(\Re(\gamma)>\Re(\eta)>0),
	\label{ErdelyiIntegral-2}\\
	{}_{2}F_{1}\left[\begin{matrix}
		\alpha,\beta\\
		\gamma
	\end{matrix};z\right]
	&=\int_{0}^{1}{}_{3}F_{2}\left[\begin{matrix}
		\alpha,\beta,\eta\\
		\lambda,\nu 
	\end{matrix}; zx
	\right]\mathrm{d}\mu_{\eta-\lambda,\gamma-\lambda,\gamma-\lambda+\eta-\nu,\nu}(x)\notag\\ &\hspace{1cm}(\min\{\Re(\nu),\Re(\lambda),\Re(\gamma-\lambda+\eta-\nu)\}>0).
	\label{ErdelyiIntegral-3}
\end{align}
In \eqref{ErdelyiIntegral-3}, $\mu_{\alpha,\beta,\gamma,\eta}$ is the \emph{hypergeometric measure} defined by \cite[p. 6, Eq. (11)]{Luo-Xu-Raina-2022}
\begin{equation}\label{HypergeometricMeasure}
	\mathrm{d}\mu_{\alpha,\beta,\gamma,\eta}(t)
	:=\mathfrak{m}_{\alpha,\beta,\gamma,\eta}(t)
	\mathrm{d}t,
\end{equation}
where $\min\{\Re(\eta),\Re(\gamma),\Re(\eta+\gamma-\alpha-\beta)\}>0$ and
\begin{align*}
\mathfrak{m}_{\alpha,\beta,\gamma,\eta}(t)&:=\frac{\Gamma(\eta+\gamma-\alpha)\Gamma(\eta+\gamma-\beta)}{\Gamma(\eta)\Gamma(\gamma)\Gamma(\eta+\gamma-\alpha-\beta)}
t^{\eta-1}(1-t)^{\gamma-1}
{}_{2}F_{1}\left[\begin{matrix}
	\alpha,\beta\\
	\gamma
\end{matrix}; 1-t
\right].
\end{align*}
We have 
$\displaystyle \int_{0}^{1}\mathrm{d}\mu_{\alpha,\beta,\gamma,\eta}(t)=1$ and
$\mathfrak{m}_{0,\beta,\gamma,\eta}(t)
=\mathfrak{m}_{\alpha,0,\gamma,\eta}(t)
=\mathfrak{m}_{\eta,\gamma}(t)$.

It is particularly noteworthy that Erd\'{e}lyi's integrals are not only of computational interest but also have many important and profound applications. Gasper \cite{Gasper-1975} has shown that \eqref{ErdelyiIntegral-2} provides a natural way to the formulas of Dirichlet-Mehler type for the Jacobi polynomials and the generalized Legendre functions. Virchenko and Fedotova also frequently used the integrals \eqref{ErdelyiIntegral-1} and \eqref{ErdelyiIntegral-2} in their study of the generalized Legendre functions (see \cite{Virchenko-1984}, \cite{Virchenko-Fedotova-1990} and \cite{Virchenko-Fedotova-Book-2001}). Sprinkhuizen-Kuyper \cite{Sprinkhuizen-Kuyper-1979} used \eqref{ErdelyiIntegral-2} to prove a compositional property for her fractional integral operator $I_{\nu}^{\mu,\lambda}$. The integral \eqref{ErdelyiIntegral-2} also plays an important role in the proof of product formula for biangle polynomials \cite{Koornwinder-Schwartz-1997}. The proofs of the compositional properties of Saigo's fractional integral operators $I^{\alpha,\beta,\eta}$ and $J^{\alpha,\beta,\eta}$ are based on integrals \eqref{ErdelyiIntegral-1} and \eqref{ErdelyiIntegral-2} \cite{Saigo-1978}. Raina \cite{Raina-2010} showed that the integral \eqref{ErdelyiIntegral-1} can be used to construct a solution to a certain Abel-type integral equation involving the Appell hypergeometric function $F_3$ in the kernel.  

We refer to integrals that have a form similar to integrals \eqref{ErdelyiIntegral-1}--\eqref{ErdelyiIntegral-3} as \emph{Erd\'{e}lyi-type integrals}. More precisely, for a given hypergeometric function, its Erd\'{e}lyi-type integrals can generally express it as an integral of the product of two functions of the same type (see \cite[p. 2]{Luo-Xu-Raina-2022}). 

Many Erd\'{e}lyi-type integrals for various kinds of univariate and multivariate hypergeometric functions have been discovered using the technique of Erd\'{e}lyi's fractional integration by parts. For example, Luo and Raina \cite{Luo-Raina-2017} recenlty extended \eqref{ErdelyiIntegral-2} to a special class of generalized hypergeometric functions by using the theory of Miller and Paris and then found their applications in the theory of certain generalized fractional integral operator \cite{Luo-Raina-2022}. The table in Appendix \ref{Appendix-1} lists all hypergeometric functions for which Erd\'{e}lyi-type integrals and some of their $q$- and discrete analogues have been established.

\subsection{The plan of the work}

Very recently, there has been a sustained growth in research interest regarding the three-variable hypergeometric function $F_K$ introduced by Saran \cite{Saran-1955a} and many interesting results have been achieved (see \cite{Antonova-Dmytryshyn-Goran-2023}, \cite{Dmytryshyn-Goran-2024}, \cite{Hang-Luo-2025}, \cite{Luo-Raina-2021} and \cite{Luo-Xu-Raina-2022}). The $F_K$-function is defined by
\begin{align}
	&F_K [\alpha_1, \alpha_2, \alpha_2, \beta_1, \beta_2, \beta_1; \gamma_1, \gamma_2, \gamma_3; x, y, z]\notag\\
	&\hspace{0.5cm}:=\sum_{m,n,p=0}^{\infty} \frac{(\alpha_1)_m (\alpha_2)_{n+p} (\beta_1)_{m+p} (\beta_2)_{n}}{(\gamma_1)_m (\gamma_2)_n (\gamma_3)_p m! n! p!}x^m y^n z^p\label{Def-FK}\\
	&\hspace{0.5cm}=\sum_{p=0}^{\infty}
	\frac{(\alpha_2)_p (\beta_1)_p}{p!(\gamma_3)_p}
	{}_{2}F_{1}\left[\begin{matrix}
		\beta_1+p,\alpha_1\\
		\gamma_1 
	\end{matrix};x\right]
	{}_{2}F_{1}\left[\begin{matrix}
	\alpha_2+p,\beta_2\\
	\gamma_2 
	\end{matrix};y\right]z^p,\label{Def-FK-2F1}
\end{align}
where $(x,y,z) \in \mathbb{D}_K := \{(x,y,z) \in \mathbb{C}^3: |x|<1, |y|<1, |z|<(1-|x|)(1-|y|)\}$. 

In 2022, Luo \emph{et al.} \cite{Luo-Xu-Raina-2022} discovered an elegant Erd\'{e}lyi-type integral for Saran's $F_K$-function.
\begin{theorem}[{\cite{Luo-Xu-Raina-2022}}]\label{Th-ErdelyiIntegral-FK}
	Let $\Re(\alpha_1+\eta_1)>\Re(\lambda_1)>0$, $\Re(\beta_2+\mu_2)>\Re(\lambda_2)>0$ and $\Re(\gamma_3)>\Re(\beta_1)>0$. Then we have
	\begin{align}\label{Th-ErdelyiIntegral-FK-1}
		& F_K[\alpha_1, \alpha_2, \alpha_2, \beta_1, \beta_2, \beta_1; \alpha_1+\eta_1, \beta_2+\mu_2, \gamma_3; x,y,z] = \int_0^1 \int_0^1 \int_0^1 (1-ux)^{-\lambda_3} (1-vy)^{-\eta_2}\notag\\
		&\hspace{0.5cm}\cdot F_K[ \alpha_1, \alpha_2-\eta_2, \alpha_2-\eta_2, \beta_1-\lambda_3, \beta_2, \beta_1-\lambda_3; \alpha_1-\lambda_1+\eta_1, \beta_2-\lambda_2+\mu_2, \beta_1-\lambda_3; ux, vy, wz]\notag\\
		&\hspace{0.5cm}\cdot F_K \left[\lambda_1-\eta_1, \eta_2, \eta_2, \lambda_3, \lambda_2-\mu_2, \lambda_3; \lambda_1, \lambda_2, \lambda_3; \frac{(1-u)x}{1-ux}, \frac{(1-v)y}{1-vy}, \frac{wz}{(1-ux)(1-vy)} \right]\notag\\
		&\hspace{0.5cm}\cdot \mathrm{d} \mu_{\alpha_1-\lambda_1+\eta_1, \lambda_1} (u) \mathrm{d} \mu_{\beta_2-\lambda_2+\mu_2, \lambda_2} (v) \mathrm{d} \mu_{\beta_1, \gamma_3-\beta_1}(w),
	\end{align}
	where $(x,y,z) \in \mathbb{D}_K$ and $\mathrm{d}\mu_{\alpha,\beta}(t)$ is defined in \eqref{DirichletMeasure}. 
\end{theorem}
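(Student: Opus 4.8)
The plan is to peel the statement down to the Gauss-function level via the reduction \eqref{Def-FK-2F1} and then recover it by evaluating the right-hand side of \eqref{Th-ErdelyiIntegral-FK-1}. First I would dispose of the parameter $\gamma_3$. Since $\Re(\gamma_3)>\Re(\beta_1)>0$, the Beta integral gives $\tfrac{(\beta_1)_p}{(\gamma_3)_p}=\int_0^1 w^p\,\mathrm{d}\mu_{\beta_1,\gamma_3-\beta_1}(w)$, so substituting this into \eqref{Def-FK-2F1} rewrites the left-hand side as $\int_0^1 G(x,y,wz)\,\mathrm{d}\mu_{\beta_1,\gamma_3-\beta_1}(w)$, where $G$ is the same $F_K$ with its third lower parameter collapsed to $\beta_1$ (so that the coefficient is simply $(\alpha_2)_p/p!$). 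This produces exactly the third measure $\mathrm{d}\mu_{\beta_1,\gamma_3-\beta_1}(w)$ appearing in \eqref{Th-ErdelyiIntegral-FK-1} and reduces the whole problem to the case $\gamma_3=\beta_1$; I may therefore drop the $w$-integral and prove the identity with $z$ in place of $wz$.

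The core is then a two-variable statement, which I would attack from the right-hand side. Expand the two inner $F_K$-factors by \eqref{Def-FK-2F1}, with summation indices $r$ for the first factor and $q$ for the second. Because each inner $F_K$ has its third upper and third lower parameters equal (namely $\beta_1-\lambda_3$ for the first and $\lambda_3$ for the second), the corresponding ratios of Pochhammer symbols collapse and the coefficients become $\tfrac{(\alpha_2-\eta_2)_r}{r!}$ and $\tfrac{(\eta_2)_q}{q!}$. The integrand then separates into an $x$-block times a $y$-block times $z^{r+q}$, where the prefactor $(1-ux)^{-\lambda_3}$ combines with the factor $(1-ux)^{-q}$ coming from the third argument $\tfrac{z}{(1-ux)(1-vy)}$ of the second $F_K$ to give $(1-ux)^{-\lambda_3-q}$, and symmetrically in $v,y$.

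The decisive step is to integrate termwise in $u$ and $v$. For fixed $r,q$ the $u$-integral is precisely the right-hand side of the Gauss-function integral \eqref{ErdelyiIntegral-1}, applied to $\,{}_{2}F_{1}[\alpha_1,\beta_1+p;\alpha_1+\eta_1;x]$ with $p:=r+q$, the free parameter $\alpha'$ chosen as $\lambda_3+q$, the reduced second parameter equal to $\beta_1-\lambda_3+r$, and the measure $\mathrm{d}\mu_{\alpha_1-\lambda_1+\eta_1,\lambda_1}(u)$. The crucial observation is that, since \eqref{ErdelyiIntegral-1} is valid for \emph{every} admissible $\alpha'$, this integral depends on $r,q$ only through $p$ and equals $\,{}_{2}F_{1}[\beta_1+p,\alpha_1;\alpha_1+\eta_1;x]$; the $v$-integral likewise returns $\,{}_{2}F_{1}[\alpha_2+p,\beta_2;\beta_2+\mu_2;y]$, independent of the split. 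Regrouping the double sum by $p=r+q$, the inner coefficient sum is $\sum_{r+q=p}\tfrac{(\alpha_2-\eta_2)_r(\eta_2)_q}{r!\,q!}=\tfrac{(\alpha_2)_p}{p!}$ by the Chu--Vandermonde convolution $(\alpha_2)_p=\sum_{r+q=p}\binom{p}{q}(\eta_2)_q(\alpha_2-\eta_2)_r$. This reassembles exactly $\sum_p\tfrac{(\alpha_2)_p}{p!}z^p\,{}_{2}F_{1}[\beta_1+p,\alpha_1;\alpha_1+\eta_1;x]\,{}_{2}F_{1}[\alpha_2+p,\beta_2;\beta_2+\mu_2;y]=G(x,y,z)$, the $\gamma_3=\beta_1$ left-hand side; restoring the $w$-integral settles the general $\gamma_3$.

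The step that will require the most care is the justification of the termwise integration, i.e. interchanging the $r,q$-summations with the $u,v$-integrations. This is where $(x,y,z)\in\mathbb{D}_K$ together with $\Re(\alpha_1+\eta_1)>\Re(\lambda_1)>0$ and $\Re(\beta_2+\mu_2)>\Re(\lambda_2)>0$ enter: they secure both the validity of \eqref{ErdelyiIntegral-1} uniformly for the shifted values $\alpha'=\lambda_3+q$ and $\alpha'=\eta_2+q$, and the absolute convergence needed for Fubini. The conceptual crux — and what makes this proof short and genuinely different from a fractional-integration-by-parts derivation — is the recognition that the free parameter $\alpha'$ in \eqref{ErdelyiIntegral-1} is exactly the mechanism by which the expansion index $q$ of the second $F_K$ is reabsorbed, so that the two-variable Erdélyi integral for $F_K$ decouples into two independent one-variable Erdélyi integrals for $\,{}_{2}F_{1}$, glued together by an elementary binomial convolution.
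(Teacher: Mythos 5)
Your proposal is correct and is essentially the paper's own proof: both expand the two inner $F_K$-factors via \eqref{Def-FK-2F1}, evaluate the $u$- and $v$-integrals termwise by Erd\'{e}lyi's integral \eqref{ErdelyiIntegral-1} with the shifted parameter $\alpha'=\lambda_3+q$ (resp.\ $\eta_2+q$), handle $w$ by the Beta moment $\int_0^1 w^p\,\mathrm{d}\mu_{\beta_1,\gamma_3-\beta_1}(w)=(\beta_1)_p/(\gamma_3)_p$, and conclude with the Chu--Vandermonde convolution. Your only deviation---peeling off the $w$-integral first to reduce to $\gamma_3=\beta_1$ rather than carrying all three integrals together---is a cosmetic reorganization of the same computation.
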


However, the initial proof of integral \eqref{Th-ErdelyiIntegral-FK-1} was relatively difficult. In fact, Luo \emph{et al}. first established a very general integral involving the Srivastava-Daoust function through fractional integration by parts, and then reduced it to \eqref{Th-ErdelyiIntegral-FK-1} by specializing the parameters. In this paper, we shall provide a direct and simple proof of Theorem \ref{Th-ErdelyiIntegral-FK} in Section \ref{Section-3}. Then in Section \ref{Section-4}, we shall derive a curious integral representation for Appell function $F_2$ from Theorem \ref{Th-ErdelyiIntegral-FK}. A further extension, which is related to Belitsky's generalization of $F_K$ \cite[p. 69]{Belitsky-2018}, is considered in Section \ref{Section-3-3}.

Section \ref{Section-5} devotes to the various $q$-Erd\'{e}lyi-type integrals for $q$-analogue of $F_K$. Throughout this paper, we assume that $0<q<1$. For $a\in\mathbb{C}$, we define the $q$-shifted factorials as follows
\[
	(a;q)_0=1, ~~~
	(a;q)_n=\prod_{j=0}^{n-1}(1-aq^j)~~~ (n\in \mathbb{Z}_{\geq1})
\]
and
\[
(a;q)_{\infty}=\lim_{n\rightarrow+\infty}(a;q)_n=\prod_{j=0}^{\infty}(1-aq^j).
\]
The multiple $q$-shifted factorials are defined by
$(a_1,\cdots,a_m;q)_n=(a_1;q)_n\cdots(a_m;q)_n$, where $n$ could be an integer or $+\infty$. 
Let $f:\mathbb{C}^k\rightarrow\mathbb{C}$ be a function. The $k$-dimensional $q$-integral can be defined by (see, for example, \cite[p. 1]{Kaneko-2022} and \cite[p. 241]{Warnaar-2005})
\begin{equation}\label{Def-q-integral}
	\int_{[0,1]^k}f(\mathbf{x})\mathrm{d}_q \mathbf{x}=(1-q)^k\sum_{\mathbf{n}\in\mathbb{Z}_{\geq0}^k}f(q^{\mathbf{n}})q^{\langle\mathbf{n}\rangle}. 
\end{equation}
where $\mathbf{x}=(x_1,\cdots,x_k)$, $\displaystyle
\sum_{\mathbf{n}\in\mathbb{Z}_{\geq0}^k}\equiv \sum_{n_1=0}^{\infty}\cdots \sum_{n_k=0}^{\infty}$ and $\langle \mathbf{n}\rangle=n_1+\cdots+n_k$. 

The $q$-gamma function $\Gamma_q(x)$ is defined by \cite[p. 20, Eq. (1.10.1)]{Gasper-Rahman-2004}
\[
\Gamma_q(x)=\frac{(q;q)_{\infty}}{(q^x;q)_{\infty}}(1-q)^{1-x}
\]
and the $q$-beta function is defined by \cite[p. 22, Eq. (1.10.13)]{Gasper-Rahman-2004}
\[
B_q(x,y)=\frac{\Gamma_q(x)\Gamma_q(y)}{\Gamma_q(x+y)}=(1-q)\frac{(q,q^{x+y};q)_{\infty}}{(q^x,q^y;q)_{\infty}}.
\]
Note that $\displaystyle \lim_{q\rightarrow 1^{-}}\Gamma_q(x)=\Gamma(x)$ and $\displaystyle \lim_{q\rightarrow1^{-}}B_q(x,y)=B(x,y)$. The $q$-integral representation for $B_q(x,y)$ is given by \cite[p. 23, Eq. (1.11.7)]{Gasper-Rahman-2004}
\[
B_q(x,y)=\int_{0}^{1} t^{x-1} \frac{(tq;q)_{\infty}}{(tq^{y};q)_{\infty}}\mathrm{d}_q t, ~~~ \Re(x)>0, ~ y\notin \mathbb{Z}_{\leq 0}. 
\]

The $q$-analogue of $F_K$ can be defined by (see \cite{Ernst-2020} and \cite{Sahai-Verma-2014})
\begin{align}
	&\Phi_K[\alpha_1, \alpha_2, \alpha_2, \beta_1, \beta_2, \beta_1; \gamma_1, \gamma_2, \gamma_3; q, x,y,z]\notag \\
	&\hspace{0.5cm}:=\sum_{m,n,p=0}^{\infty} \frac{(\alpha_1;q)_{m} (\alpha_2;q)_{n+p} (\beta_1;q)_{m+p} (\beta_2;q)_{n}}{(\gamma_1,q;q)_{m} (\gamma_2,q;q)_{n} (\gamma_3,q;q)_{p}}x^m y^n z^p\label{Def-qFK}\\
	&\hspace{0.5cm}=\sum_{p=0}^{\infty}
	\frac{(\alpha_2,\beta_1;q)_p}{(\gamma_3,q;q)_p}
	{}_{2}\phi_{1}\left[\begin{matrix}
		\beta_1 q^p,\alpha_1\\
		\gamma_1 
	\end{matrix};q,x\right]
	{}_{2}\phi_{1}\left[\begin{matrix}
		\alpha_2 q^p,\beta_2\\
		\gamma_2 
	\end{matrix};q,y\right]z^p,\label{Def-qFK-2varphi1}
\end{align}
where $|x|<1$, $|y|<1$, $|z|<1$ and ${}_{r}\phi_{s}$ is the basic hypergeometric series defined by \cite[p. 4, Eq. (1.2.22)]{Gasper-Rahman-2004}
\[
{}_{r}\phi_{s}\left[\begin{matrix}
	a_1,\cdots,a_r\\
	b_1,\cdots,b_s
\end{matrix};q,z\right]:=\sum_{n=0}^{\infty}\frac{(a_1,\cdots,a_r;q)_n}{(b_1,\cdots,b_s,q;q)_n}\Big((-1)^nq^{\binom{n}{2}}\Big)^{1+s-r}z^n.
\]   
For simplicity, we write 
\begin{align}
	&\widetilde{\Phi}_K
	[\alpha_1, \alpha_2, \alpha_2, \beta_1, \beta_2, \beta_1; \gamma_1, \gamma_2, \gamma_3; q, x,y,z]\notag\\
	&\hspace{0.5cm}=\Phi_K\big[q^{\alpha_1}, q^{\alpha_2}, q^{\alpha_2}, q^{\beta_1}, q^{\beta_2}, q^{\beta_1}; q^{\gamma_1}, q^{\gamma_2}, q^{\gamma_3}; q, x,y,z\big]
\end{align}
and
\[
{}_{r}\widetilde{\phi}_{s}\left[\begin{matrix}
	a_1,\cdots,a_r\\
	b_1,\cdots,b_s
\end{matrix};q,z\right]
={}_{r}\phi_{s}\left[\begin{matrix}
	q^{a_1},\cdots,q^{a_r}\\
	q^{b_1},\cdots,q^{b_s}
\end{matrix};q,z\right].
\]

The \emph{only} $q$-integral representation (of Bateman type) for $\Phi_K$  is given by Ernst \cite[p. 10, Theorem 5]{Ernst-2020}:
\begin{align}
	&\widetilde{\Phi}_K[\alpha_1, \alpha_2, \alpha_2, \beta_1, \beta_2, \beta_1; \gamma_1, \gamma_2, \gamma_3; q, x,y,z]\notag\\
	&\hspace{0.5cm}=\int_{[0,1]^3}
	\widetilde{\Phi}_K[\alpha_1, \alpha_2, \alpha_2, \beta_1, \beta_2, \beta_1; \nu_1, \nu_2, \nu_3; q, xu,yv,zw]\notag\\
	&\hspace{1.5cm}\cdot\mathrm{d}\mu_{\nu_1,\gamma_1-\nu_1}(u;q)
	\mathrm{d}\mu_{\nu_2,\gamma_2-\nu_2}(v;q)
	\mathrm{d}\mu_{\nu_3,\gamma_3-\nu_3}(w;q).
\end{align}
Here and in what follows, we define the $q$-analogue of the Dirichlet measure \eqref{DirichletMeasure} as follows:
\begin{equation}\label{q-DirichletMeasure}
	\mathrm{d}\mu_{\alpha,\beta}(t;q):= \mathfrak{m}_{\alpha,\beta}(t;q)\mathrm{d}_q t,
\end{equation}
where
\[
\mathfrak{m}_{\alpha,\beta}(t;q):=\frac{\Gamma_q(\alpha+\beta)}{\Gamma_q(\alpha) \Gamma_q(\beta)} t^{\alpha-1} \frac{(tq;q)_{\infty}}{(tq^{\beta};q)_{\infty}}, ~~~\min\{\Re(\alpha),\Re(\beta)\}>0.
\]
Obviously, we have $\displaystyle\int_{0}^{1}\mathrm{d}\mu_{\alpha,\beta}(t;q)=1$ and $\displaystyle\lim_{q\rightarrow1^{-}}\mathfrak{m}_{\alpha,\beta}(t;q)=\mathfrak{m}_{\alpha,\beta}(t)$.

Actually, in Section \ref{Section-5}, we shall first prove a Joshi-Vyas type theorem and derive some useful corollaries from it. Then a discrete analogue for one of the corollaries is obtained. Finally, we present a $q$-analogue of Theorem \ref{Th-ErdelyiIntegral-FK}. 

In the next section, we will further introduce some results related to $q$-series.

\section{Preliminaries}\label{Section-2} 
 
Gasper \cite{Gasper-2000} discussed the $q$-analogues of Erd\'{e}lyi's integrals \eqref{ErdelyiIntegral-1}--\eqref{ErdelyiIntegral-3} and provided their discrete analogues. The integral \eqref{ErdelyiIntegral-1} has the following $q$-analogue \cite[p. 4, Eq. (1.13)]{Gasper-2000}
\begin{align} \label{eq:2phi1 Erdelyi1}
	{}_2\widetilde{\phi}_1 \left[ \begin{matrix}
		\alpha, \beta \\
		\gamma
	\end{matrix}; q, x \right]
	&=\int_0^1 \frac{(xtq^{\alpha'}; q)_{\infty}}{(xt; q)_{\infty}} {}_2\widetilde{\phi}_1 \left[\begin{matrix}
		\alpha-\alpha', \beta \\
		\lambda
	\end{matrix}; q, xtq^{\alpha'} \right]\notag\\
	&\hspace{0.5cm}\cdot {}_3 \phi_2 \left[ \begin{matrix}
			q^{\alpha'}, q^{\beta-\lambda}, t^{-1} \\
			q^{\gamma-\lambda}, q/(xt)
		\end{matrix};q,q\right] \mathrm{d}\mu_{\lambda, \gamma-\lambda}(t;q),
\end{align}
where $\Re(\gamma)>\Re(\lambda)>0$ and $\mathrm{d}\mu_{\alpha,\beta}(t;q)$ is defined in \eqref{q-DirichletMeasure}. Before stating the $q$-analogue of \eqref{ErdelyiIntegral-3}, we have to first introduce the $q$-analogue of the hypergeometric measure \eqref{HypergeometricMeasure}: 
\begin{equation}\label{q-HypergeometricMeasure}
	\mathrm{d}\mu_{\alpha,\beta,\gamma,\eta}(t;q)
	:=\mathfrak{m}_{\alpha,\beta,\gamma,\eta}(t;q)\mathrm{d}_q t,
\end{equation}
where $\min\{\Re(\eta), \Re(\gamma), \Re(\eta+\gamma-\alpha-\beta)\}>0$ and
\begin{align}
	\mathfrak{m}_{\alpha,\beta,\gamma,\eta}(t;q)
	&=\frac{\Gamma_{q}(\eta+\gamma-\alpha)\Gamma_q(\eta+\gamma-\beta)}{\Gamma_q(\eta)\Gamma_q(\gamma)\Gamma_q(\eta+\gamma-\alpha-\beta)}\notag\\
	&\hspace{1cm}\cdot t^{\eta-1}\frac{(tq;q)_{\infty}}{(tq^{\gamma};q)_\infty}
	{}_{3}\phi_1\left[\begin{matrix}
		q^{\alpha}, q^{\beta},t^{-1}\\
		q^{\gamma}
	\end{matrix};q,tq^{\gamma-\alpha-\beta}\right].
\end{align} 
We have $\displaystyle 
\int_0^1\mathrm{d}\mu_{\eta-\lambda,\gamma-\lambda,\gamma-\lambda+\eta-\nu,\nu}(t;q)=1$ and $\displaystyle 
\lim_{q\rightarrow 1^{-}}\mathfrak{m}_{\alpha,\beta,\gamma,\eta}(t;q)
=\mathfrak{m}_{\alpha,\beta,\gamma,\eta}(t)$.
Then the $q$-analogue of \eqref{ErdelyiIntegral-3} is given by \cite[p. 4, Eq. (1.14)]{Gasper-2000}
\begin{align} \label{eq:2phi1 Erdelyi2}
	{}_2\widetilde{\phi}_1\left[\begin{matrix}
		\alpha,\beta \\
		\gamma
	\end{matrix}; q, x \right]
	&=\int_0^1{}_3\widetilde{\phi}_2\left[\begin{matrix}
		\alpha, \beta,\eta \\
		\lambda, \nu
	\end{matrix};q,xt\right]\mathrm{d}\mu_{\eta-\lambda,\gamma-\lambda,\gamma-\lambda+\eta-\nu,\nu}(t;q).
\end{align}
where $\min\{\Re(\lambda), \Re(\nu), \Re(\gamma+\eta-\lambda-\nu)\}>0$.  Joshi and Vyas \cite{Joshi-Vyas-2006} also derived some nice $q$-Erd\'{e}lyi-type integrals, one of which is discussed in Section \ref{Section-5}.

In the same paper \cite{Gasper-2000}, Gasper also found the following discrete $q$-analogue of \eqref{eq:2phi1 Erdelyi2}:
\begin{align}\label{Gasper-DiscreteAnalog}
	{}_{3}\phi_{2}\left[\begin{matrix}
		\alpha,\beta,q^{-n}\\
		\gamma,\delta 
	\end{matrix};q,q\right]
	&=\frac{(q,\lambda;q)_n}{(\gamma,\mu;q)_n}\sum_{k=0}^{n}
	\frac{(\nu;q)_k(\gamma\mu/(\lambda\nu);q)_{n-k}}{(q;q)_k(q;q)_{n-k}}
	\nu^{n-k}\notag\\
	&\cdot {}_{3}\phi_{2}\left[\begin{matrix}
		\mu/\lambda,\gamma/\lambda,q^{k-n}\\
		\gamma\mu/(\lambda\nu), q^{1-n}/\lambda 
	\end{matrix};q,q^{1-k}/\nu\right]
	{}_{4}\phi_{3}\left[\begin{matrix}
		\alpha, \beta, \mu, q^{-k}\\
		\lambda, \nu, \delta 
	\end{matrix};q,q\right].
\end{align}

Finally, we also need the following $q$-hypergeometric function of three variables (see \cite[p. 96]{Denis-1988} and \cite[p. 35]{Rassias-Singh-Srivastava-1994}): 
\begin{align}\label{Def-q-TripleSeries}
	\phi^{(3)}[x,y,z]&\equiv\phi^{(3)} \left[ \begin{matrix}
			(a) :: (b); (b'); (b''): (c); (c'); (c'') \\
			(e) :: (g); (g'); (g''): (h); (h'); (h'')
	\end{matrix}; q, x, y, z  \right]\notag\\
	&:=\sum_{m,n,p=0}^{\infty} 
	\frac{\displaystyle \prod_{r=1}^A (a_r;q)_{m+n+p}}{\displaystyle \prod_{r=1}^E (e_r;q)_{m+n+p} }\cdot 
	\frac{\displaystyle
		\prod_{r=1}^B (b_r;q)_{m+n} 
		\prod_{r=1}^{B'} (b_r';q)_{n+p} 
		\prod_{r=1}^{B''} (b_r'';q)_{p+m}}{\displaystyle 
		\prod_{r=1}^G (g_r;q)_{m+n} 
		\prod_{r=1}^{G'} (g_r';q)_{n+p} 
		\prod_{r=1}^{G''} (g_r'';q)_{p+m}}\notag\\
	&\hspace{0.5cm}\cdot\frac{\displaystyle \prod_{r=1}^C (c_r;q)_{m}\prod_{r=1}^{C'} (c_r';q)_{n} \prod_{r=1}^{C''} (c_r'';q)_{p}}{\displaystyle \prod_{r=1}^H (h_r;q)_{m}\prod_{r=1}^{H'} (h_r';q)_{n} \prod_{r=1}^{H''} (h_r'';q)_{p}}
	\cdot\frac{x^m}{(q;q)_m}\frac{y^n}{(q;q)_n}\frac{z^p}{(q;q)_p},
\end{align}
where $|x|<1$, $|y|<1$ and $|z|<1$.

\section{Theorem \ref{Th-ErdelyiIntegral-FK} revisited}
\subsection{An alternative proof of Theorem \ref{Th-ErdelyiIntegral-FK}}\label{Section-3}

For convenience, let $I$ denote the triple integral in \eqref{Th-ErdelyiIntegral-FK-1}. By \eqref{Def-FK-2F1}, the first $F_K$-function in the integrand can be expanded as
\begin{align}\label{Th-ErdelyiIntegral-FK-Proof-1}
	&F_K[ \alpha_1, \alpha_2-\eta_2, \alpha_2-\eta_2, \beta_1-\lambda_3, \beta_2, \beta_1-\lambda_3; \alpha_1-\lambda_1+\eta_1, \beta_2-\lambda_2+\mu_2, \beta_1-\lambda_3; ux, vy, wz]\notag\\
	&\hspace{1cm}=\sum_{m=0}^{\infty}\frac{(\alpha_2-\eta_2)_m}{m!} 
	{}_2F_1 \left[\begin{matrix}
		\beta_1-\lambda_3+m, \alpha_1 \\
		\alpha_1-\lambda_1+\eta_1
	\end{matrix}; ux
	\right] 
	{}_2F_1 \left[\begin{matrix}
		\alpha_2-\eta_2+m, \beta_2 \\
		\beta_2-\lambda_2+\mu_2
	\end{matrix}; vy
	\right] (wz)^m.
\end{align}
Since $\mathbb{D}_{K}$ is a Reinhardt domain, we have immediately $(ux,vy,wz)\in \mathbb{D}_{K}$ $(u,v,w\in(0,1))$ and thus the region of convergence of the series in \eqref{Th-ErdelyiIntegral-FK-Proof-1} is clear. For the second $F_K$-function in the integrand, we have 
\begin{align}\label{Th-ErdelyiIntegral-FK-Proof-2}
	& F_K \left[\lambda_1-\eta_1, \eta_2, \eta_2, \lambda_3, \lambda_2-\mu_2, \lambda_3; \lambda_1, \lambda_2, \lambda_3; \frac{(1-u)x}{1-ux}, \frac{(1-v)y}{1-vy}, \frac{wz}{(1-ux)(1-vy)} \right]\notag\\
	&\hspace{0.5cm}=\sum_{n=0}^{\infty} \frac{(\eta_2)_n}{n!} 
	{}_2F_1 \left[\begin{matrix}
		\lambda_3+n, \lambda_1-\eta_1 \\
		\lambda_1
	\end{matrix}; \frac{(1-u)x}{1-ux} \right]\notag\\
	&\hspace{1.5cm}\cdot  
	{}_2F_1 \left[\begin{matrix}
		\eta_2+n, \lambda_2-\mu_2 \\
		\lambda_2
	\end{matrix}; \frac{(1-v)y}{1-vy}\right]
	\cdot\left[\frac{wz}{(1-ux)(1-vy)}\right]^n.
\end{align}
Note that
\[
\left|\frac{(1-u)x}{1-ux}\right|
<\frac{1-u}{1-u|x|}<1, ~~~
\left|\frac{(1-v)x}{1-vx}\right|
<\frac{1-v}{1-v|x|}<1
\]
and
\begin{align*}
\frac{|\frac{wz}{(1-ux)(1-vy)}|}{\big(1-\big|\frac{(1-u)x}{1-ux}\big|\big)\big(1-\big|\frac{(1-v)y}{1-vy}\big|\big)}
&=\frac{w|z|}{(|1-ux|-(1-u)|x|)(|1-vy|-(1-v)|y|)}\\
&\leq\frac{|z|}{(1-|x|)(1-|y|)}<1,
\end{align*}
namely, 
\[
\left(\frac{(1-u)x}{1-ux},\frac{(1-v)y}{1-vy},\frac{wz}{(1-ux)(1-vy)}\right)\in\mathbb{D}_{K}.
\]
Thus the region of convergence of the series in \eqref{Th-ErdelyiIntegral-FK-Proof-2} is also fully characterized. 

Now we have 
\begin{align}\label{Th-ErdelyiIntegral-FK-Proof-3}
	I&=\sum_{m=0}^{\infty}\frac{(\alpha_2-\eta_2)_m}{m!}z^m
	\sum_{n=0}^{\infty} \frac{(\eta_2)_n}{n!}z^n
	\int_{0}^{1} w^{m+n}\mathrm{d} \mu_{\beta_1, \gamma_3-\beta_1}(w)\notag\\
	&\hspace{0.5cm}\cdot 
	\int_{0}^{1}(1-ux)^{-\lambda_3-n}
	{}_2F_1 \left[\begin{matrix}
		\beta_1-\lambda_3+m, \alpha_1 \\
		\alpha_1-\lambda_1+\eta_1
	\end{matrix}; ux
	\right] 
	{}_2F_1 \left[\begin{matrix}
		\lambda_3+n, \lambda_1-\eta_1 \\
		\lambda_1
	\end{matrix}; \frac{(1-u)x}{1-ux} \right]\mathrm{d} \mu_{\alpha_1-\lambda_1+\eta_1, \lambda_1} (u)\notag\\
	&\hspace{0.5cm}\cdot
	\int_{0}^{1}(1-vy)^{-\eta_2-n}
	{}_2F_1 \left[\begin{matrix}
		\alpha_2-\eta_2+m, \beta_2 \\
		\beta_2-\lambda_2+\mu_2
	\end{matrix}; vy
	\right]{}_2F_1 \left[\begin{matrix}
		\eta_2+n, \lambda_2-\mu_2 \\
		\lambda_2
	\end{matrix}; \frac{(1-v)y}{1-vy}\right]\mathrm{d} \mu_{\beta_2-\lambda_2+\mu_2, \lambda_2} (v) .
\end{align}
Note that
\begin{equation}\label{Th-ErdelyiIntegral-FK-Proof-4}
	\int_{0}^{1}w^{m+n}\mathrm{d}\mu_{\beta_1, \gamma_3-\beta_1}(w)=\frac{(\beta_1)_{m+n}}{(\gamma_3)_{m+n}},
\end{equation}
where $\Re(\gamma_3)>\Re(\beta_1)>0$. By letting $\alpha'\rightarrow\lambda_3+n$, $\beta\rightarrow\alpha_1$, $\lambda\rightarrow\alpha_1-\lambda_1+\eta_1$, $\alpha\rightarrow\beta_1+m+n$ and $\gamma\rightarrow\alpha_1+\eta_1$ in the Erd\'{e}lyi integral \eqref{ErdelyiIntegral-1}, we obtain
\begin{align}\label{Th-ErdelyiIntegral-FK-Proof-5}
&\int_{0}^{1}(1-ux)^{-\lambda_3-n}
{}_2F_1 \left[\begin{matrix}
	\beta_1-\lambda_3+m, \alpha_1 \\
	\alpha_1-\lambda_1+\eta_1
\end{matrix}; ux
\right] \notag\\
&\hspace{0.5cm}\cdot {}_2F_1 \left[\begin{matrix}
	\lambda_3+n, \lambda_1-\eta_1 \\
	\lambda_1
\end{matrix}; \frac{(1-u)x}{1-ux} \right]\mathrm{d} \mu_{\alpha_1-\lambda_1+\eta_1, \lambda_1} (u)={}_2F_1 \left[\begin{matrix}
	\beta_1+m+n, \alpha_1 \\
	\alpha_1+\eta_1
\end{matrix}; x
\right],
\end{align}
where $\Re(\alpha_1+\eta_1)>\Re(\lambda_1)>0$. 
By letting $\alpha'\rightarrow\eta_2+n$, $\beta\rightarrow\beta_2$, 
$\lambda\rightarrow\beta_2-\lambda_2+\mu_2$,  $\alpha\rightarrow\alpha_2+m+n$ and $\gamma\rightarrow\beta_2+\mu_2$ in the Erd\'{e}lyi integral \eqref{ErdelyiIntegral-1}, we obtain
\begin{align}\label{Th-ErdelyiIntegral-FK-Proof-6}
	&\int_{0}^{1}(1-vy)^{-\eta_2-n}
	{}_2F_1 \left[\begin{matrix}
		\alpha_2-\eta_2+m, \beta_2 \\
		\beta_2-\lambda_2+\mu_2
	\end{matrix}; vy
	\right]\notag\\
	&\hspace{0.5cm}\cdot{}_2F_1 \left[\begin{matrix}
		\eta_2+n, \lambda_2-\mu_2 \\
		\lambda_2
	\end{matrix}; \frac{(1-v)y}{1-vy}\right]\mathrm{d} \mu_{\beta_2-\lambda_2+\mu_2, \lambda_2} (v)={}_2F_1 \left[\begin{matrix}
	\alpha_2+m+n, \beta_2 \\
	\beta_2+\mu_2
	\end{matrix};y
	\right],
\end{align}
where $\Re(\beta_2+\mu_2)>\Re(\lambda_2)>0$. 
Substituting \eqref{Th-ErdelyiIntegral-FK-Proof-4}, \eqref{Th-ErdelyiIntegral-FK-Proof-5} and \eqref{Th-ErdelyiIntegral-FK-Proof-6} in \eqref{Th-ErdelyiIntegral-FK-Proof-3} gives
\begin{align*}
	I&=\sum_{m,n=0}^{\infty}
	\frac{(\beta_1)_{m+n}(\alpha_2-\eta_2)_m(\eta_2)_n}{(\gamma_3)_{m+n}m!n!}
	{}_2F_1 \left[\begin{matrix}
		\beta_1+m+n, \alpha_1 \\
		\alpha_1+\eta_1
	\end{matrix}; x
	\right]
	{}_2F_1 \left[\begin{matrix}
		\alpha_2+m+n, \beta_2 \\
		\beta_2+\mu_2
	\end{matrix};y
	\right]z^{m+n}\\
	&=\sum_{m=0}^{\infty}
	\frac{(\beta_1)_m}{(\gamma_3)_m}
	{}_2F_1 \left[\begin{matrix}
		\beta_1+m, \alpha_1 \\
		\alpha_1+\eta_1
	\end{matrix}; x
	\right]
	{}_2F_1 \left[\begin{matrix}
		\alpha_2+m, \beta_2 \\
		\beta_2+\mu_2
	\end{matrix};y
	\right]z^{m}\sum_{n=0}^{m}
	\frac{(\alpha_2-\eta_2)_{m-n}(\eta_2)_n}{(m-n)!n!}.
\end{align*}
The result \eqref{Th-ErdelyiIntegral-FK-1} now follows by applying the Chu-Vandermonde identity
\[
(a+b)_{m}=\sum_{n=0}^{m}\binom{m}{n}(a)_{m-n}(b)_{n}.
\]

\begin{remark}
	Joshi and Vyas \cite{Joshi-Vyas-2003} have shown that the Erd\'{e}lyi integrals \eqref{ErdelyiIntegral-1}-\eqref{ErdelyiIntegral-3} can be derived by using the series manipulation technique. Thus, inspired by the proof presented in this section, we can also write down a proof using only the series manipulation technique. However, such a proof is quite complicated, so we choose to omit it here.  
\end{remark}

\subsection{A curious integral representation for $F_2$}\label{Section-4}

In this subsection, we derive a neat and interesting integral from Theorem \ref{Th-ErdelyiIntegral-FK}. This integral is novel and has not been mentioned in the existing literature.

\begin{theorem}\label{Th-CuriousIntegral}
	Let $\Re(c_1)>\Re(d_1)>0$ and $\Re(c_2)>\Re(b_2)>0$. Then we have
	\begin{align}\label{Th-CuriousIntegral-1}
		F_{2}[a_1,b_1,b_2;c_1,c_2;y,z]
		&=\int_{0}^{1}\int_{0}^{1}(1-wz-vy)^{-a_1}
		{}_{2}F_{1}\left[\begin{matrix}
			a_1-a_2, c_1-b_1-d_1\\
			c_1-d_1
		\end{matrix};\frac{vy}{vy+wz-1}\right]\notag\\
		&\hspace{0.5cm}\cdot{}_{2}F_{1}\left[\begin{matrix}
			a_2, b_1+d_1-c_1\\
			d_1
		\end{matrix};\frac{(1-v)y}{1-vy-wz}\right]
		\mathrm{d}\mu_{c_1-d_1,d_1}(v)
		\mathrm{d}\mu_{b_2,c_2-b_2}(w).
	\end{align}
\end{theorem}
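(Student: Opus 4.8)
The plan is to \emph{specialize} Theorem~\ref{Th-ErdelyiIntegral-FK} at $x=0$. Since $\mathbb{D}_K$ contains the slice $\{x=0,\ |y|<1,\ |z|<1-|y|\}$, this specialization is legitimate. From the series \eqref{Def-FK}, killing the $x$-variable forces $m=0$, and one checks at once that
\[
F_K[\alpha_1,\alpha_2,\alpha_2,\beta_1,\beta_2,\beta_1;\gamma_1,\gamma_2,\gamma_3;0,y,z]
=F_2[\alpha_2,\beta_2,\beta_1;\gamma_2,\gamma_3;y,z].
\]
Applied to the left-hand side of \eqref{Th-ErdelyiIntegral-FK-1} this gives $F_2[\alpha_2,\beta_2,\beta_1;\beta_2+\mu_2,\gamma_3;y,z]$, so I would adopt the dictionary
\[
a_1=\alpha_2,\quad a_2=\eta_2,\quad b_1=\beta_2,\quad c_1=\beta_2+\mu_2,\quad b_2=\beta_1,\quad c_2=\gamma_3,\quad d_1=\lambda_2,
\]
leaving $\alpha_1,\eta_1,\lambda_1,\lambda_3$ as free parameters destined to cancel. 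Under this dictionary the measures $\mathrm{d}\mu_{\beta_2-\lambda_2+\mu_2,\lambda_2}(v)$ and $\mathrm{d}\mu_{\beta_1,\gamma_3-\beta_1}(w)$ become exactly $\mathrm{d}\mu_{c_1-d_1,d_1}(v)$ and $\mathrm{d}\mu_{b_2,c_2-b_2}(w)$, the seven parameters $a_1,a_2,b_1,b_2,c_1,c_2,d_1$ are realized by independent quantities of Theorem~\ref{Th-ErdelyiIntegral-FK} (hence arbitrary), and the hypotheses $\Re(\beta_2+\mu_2)>\Re(\lambda_2)>0$ and $\Re(\gamma_3)>\Re(\beta_1)>0$ become precisely $\Re(c_1)>\Re(d_1)>0$ and $\Re(c_2)>\Re(b_2)>0$ (the remaining inequality $\Re(\alpha_1+\eta_1)>\Re(\lambda_1)>0$ involves only the free parameters and is freely arrangeable).

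Next I would simplify the integrand at $x=0$. The factor $(1-ux)^{-\lambda_3}$ becomes $1$, and every $u$-dependent argument carries a factor $x$; hence the integrand is independent of $u$ and the $u$-integration collapses through $\int_0^1\mathrm{d}\mu_{\alpha_1-\lambda_1+\eta_1,\lambda_1}(u)=1$, which is where $\alpha_1,\eta_1,\lambda_1,\lambda_3$ disappear. For the two surviving $F_K$-factors the same $x=0$ reduction produces an $F_2$ in which the $z$-numerator parameter coincides with its $z$-denominator parameter (namely $\beta_1-\lambda_3$ in the first and $\lambda_3$ in the second). Such a coincident $F_2$ reduces, on splitting $(a)_{n+p}=(a)_n(a+n)_p$ and summing the resulting $p$-series by the binomial theorem, to
\[
F_2[a,b,c_2;c_1,c_2;Y,Z]=(1-Z)^{-a}\,{}_2F_1\!\left[\begin{matrix}a,b\\ c_1\end{matrix};\frac{Y}{1-Z}\right].
\]
Applying this to the first factor (with $Z=wz$) and to the second (with $Z=wz/(1-vy)$, so that $1-Z=(1-vy-wz)/(1-vy)$) and multiplying in the leftover prefactor $(1-vy)^{-\eta_2}$, the powers of $(1-vy)$ cancel and I am left with $(1-vy-wz)^{-\eta_2}(1-wz)^{-(\alpha_2-\eta_2)}$ times the Gauss functions ${}_2F_1[\alpha_2-\eta_2,\beta_2;\beta_2-\lambda_2+\mu_2;vy/(1-wz)]$ and ${}_2F_1[\eta_2,\lambda_2-\mu_2;\lambda_2;(1-v)y/(1-vy-wz)]$.

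Finally I would match this with the target integrand. The second Gauss function is already ${}_2F_1[a_2,b_1+d_1-c_1;d_1;(1-v)y/(1-vy-wz)]$ under the dictionary. The first is not in the required form, so I would apply Pfaff's transformation ${}_2F_1[a,b;c;X]=(1-X)^{-a}{}_2F_1[a,c-b;c;X/(X-1)]$ with $X=vy/(1-wz)$: here $c-b=\mu_2-\lambda_2=c_1-b_1-d_1$, $X/(X-1)=vy/(vy+wz-1)$, and $(1-X)^{-a}=(1-wz)^{\alpha_2-\eta_2}(1-vy-wz)^{-(\alpha_2-\eta_2)}$. Collecting all algebraic factors, the powers of $(1-wz)$ cancel and the powers of $(1-vy-wz)$ sum to $-\alpha_2$, producing the single prefactor $(1-wz-vy)^{-a_1}$ together with the exact first Gauss function ${}_2F_1[a_1-a_2,c_1-b_1-d_1;c_1-d_1;vy/(vy+wz-1)]$ of \eqref{Th-CuriousIntegral-1}. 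The only genuinely delicate point is this concluding bookkeeping: Pfaff must be applied in the direction whose $(1-X)^{-a}$ factor merges with the residual $(1-wz)^{-(\alpha_2-\eta_2)}$ and $(1-vy-wz)^{-\eta_2}$ so as to collapse into the single binomial $(1-wz-vy)^{-a_1}$; everything else is a mechanical consequence of the $x=0$ specialization.
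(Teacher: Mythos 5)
Your proposal is correct and follows essentially the same route as the paper's proof: set $x=0$ in Theorem \ref{Th-ErdelyiIntegral-FK} (collapsing the $u$-integration), rename parameters via the same dictionary, and then combine the coincident-parameter reduction $F_2[a,b,b';c,b';Y,Z]=(1-Z)^{-a}{}_2F_1[a,b;c;Y/(1-Z)]$ with Pfaff's transformation to merge the algebraic prefactors into $(1-wz-vy)^{-a_1}$. The only cosmetic difference is that you derive the $F_2$ reduction by series manipulation where the paper cites it from Srivastava--Karlsson.
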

\begin{proof}
	Letting $x=0$ in \eqref{Th-ErdelyiIntegral-FK-1}, it reduces to
	\begin{align}\label{Th-CuriousIntegral-Proof-1}
		& F_2[\alpha_2, \beta_2, \beta_1; \beta_2+\mu_2, \gamma_3; y,z]\notag\\
		&\hspace{0.5cm} = \int_0^1\int_0^1 (1-vy)^{-\eta_2} F_2[\alpha_2-\eta_2, \beta_2, \beta_1-\lambda_3; \beta_2-\lambda_2+\mu_2, \beta_1-\lambda_3; vy, wz]\notag\\
		&\hspace{1.5cm}\cdot F_2 \left[\eta_2, \lambda_2-\mu_2, \lambda_3; \lambda_2, \lambda_3; \frac{(1-v)y}{1-vy}, \frac{wz}{1-vy} \right]\mathrm{d} \mu_{\beta_2-\lambda_2+\mu_2, \lambda_2} (v) \mathrm{d} \mu_{\beta_1, \gamma_3-\beta_1}(w).
	\end{align}
	Next, letting $\alpha_2\rightarrow a_1$, $\beta_2\rightarrow b_1$, $\beta_1\rightarrow b_2$, $\mu_2\rightarrow c_1-b_1$, $\gamma_3\rightarrow c_2$, $\lambda_2\rightarrow d_1$, $\lambda_3\rightarrow d_2$ and $\eta_2\rightarrow a_2$ in \eqref{Th-CuriousIntegral-Proof-1} gives   
	\begin{align}\label{Th-CuriousIntegral-Proof-2}
		& F_2[a_1, b_1, b_2; c_1, c_2; y,z]\notag\\
		&\hspace{0.5cm} = \int_0^1\int_0^1 (1-vy)^{-a_2} F_2[a_1-a_2, b_1, b_2-d_2; c_1-d_1, b_2-d_2; vy, wz]\notag\\
		&\hspace{1.5cm}\cdot F_2 \left[a_2, d_1-c_1+b_1, d_2; d_1, d_2; \frac{(1-v)y}{1-vy}, \frac{wz}{1-vy} \right]\mathrm{d} \mu_{c_1-d_1, d_1} (v) \mathrm{d} \mu_{b_2, c_2-b_2}(w).
	\end{align}
	Then, by using \cite[p. 306, Eq. (109)]{Srivastava-Karlsson-Book-1985} 
	\begin{equation}\label{Th-CuriousIntegral-Proof-3}
		F_2[a, b, b'; c, b'; y,z]=(1-z)^{-a}
		{}_{2}F_{1}\left[\begin{matrix}
			a, b\\
			c
		\end{matrix};\frac{y}{1-z}\right].
	\end{equation}
	we have
	\begin{align}\label{Th-CuriousIntegral-Proof-4}
	&(1-vy)^{-a_2}F_2 \left[a_2, d_1-c_1+b_1, d_2; d_1, d_2; \frac{(1-v)y}{1-vy}, \frac{wz}{1-vy} \right]\notag\\
	&\hspace{1cm}=(1-vy-wz)^{-a_2}{}_{2}F_{1}\left[\begin{matrix}
		a_2, b_1+d_1-c_1\\
		d_1
	\end{matrix};\frac{(1-v)y}{1-vy-wz}\right].
	\end{align}
	By using \eqref{Th-CuriousIntegral-Proof-3} and the Pfaff transformation \cite[p. 300, Eq. (79)]{Srivastava-Karlsson-Book-1985}, we have
	\begin{align}\label{Th-CuriousIntegral-Proof-5}
		&F_2[a_1-a_2, b_1, b_2-d_2; c_1-d_1, b_2-d_2; vy, wz]\notag\\
		&\hspace{1cm}=(1-wz)^{-a_1+a_2}{}_{2}F_{1}\left[\begin{matrix}
			a_1-a_2, b_1\\
			c_1-d_1
		\end{matrix};\frac{vy}{1-wz}\right]\notag\\
		&\hspace{1cm}=(1-wz-vy)^{-a_1+a_2}{}_{2}F_{1}\left[\begin{matrix}
			a_1-a_2, c_1-b_1-d_1\\
			c_1-d_1
		\end{matrix};\frac{vy}{vy+wz-1}\right].
	\end{align}
	Substituting \eqref{Th-CuriousIntegral-Proof-4} and \eqref{Th-CuriousIntegral-Proof-5} into \eqref{Th-CuriousIntegral-Proof-2} leads us to \eqref{Th-CuriousIntegral-1}. 
\end{proof}
\begin{remark}
	We can prove \eqref{Th-CuriousIntegral-Proof-2} directly by using the method described in Section \ref{Section-3}. In addition, the integral \eqref{Th-CuriousIntegral-1} can also be derived from Manocha's integral \cite[p. 239, Eq. (13)]{Manocha-1967}:
	\begin{align}\label{ManochaIntegral}
		&F_2[a,b,c;d,e;y,z]
		=\int_{0}^{1}\int_{0}^{1}(1-vy-wz)^{-a'}
		F_2[a-a',b,c;\lambda,\eta;yv,zw]\notag\\
		&\hspace{1cm} \cdot F_2\left[a',b-\lambda,c-\eta;d-\lambda,e-\eta;\frac{(1-v)y}{1-vx-wz},\frac{(1-w)z}{1-vy-wz}\right]\mathrm{d}\mu_{\lambda,d-\lambda}(v)\mathrm{d}\mu_{\eta,e-\eta}(w),
	\end{align}
	where $\Re(d)>\Re(\lambda)>0$ and $\Re(e)>\Re(\eta)>0$. 
	In fact, if we set $\eta\rightarrow c$ in \eqref{ManochaIntegral} and then use the two transformation formulas as in the proof above, we obtain
	\begin{align}\label{ManochaIntegral-1}
		&F_2[a,b,c;d,e;y,z]
		=\int_{0}^{1}\int_{0}^{1}(1-vy-wz)^{-a}
		{}_{2}F_{1}\left[\begin{matrix}
			a-a',\lambda-b\\
			\lambda 
		\end{matrix};\frac{vy}{vy+wz-1}\right]
		\notag\\
		&\hspace{1cm} \cdot {}_{2}F_{1}\left[
		\begin{matrix}
			a',b-\lambda\\
			d-\lambda
		\end{matrix};\frac{(1-v)y}{1-vy-wz}\right]\mathrm{d}\mu_{\lambda,d-\lambda}(v)\mathrm{d}\mu_{c,e-c}(w).
	\end{align}
	Letting further $\lambda\rightarrow c_1-d_1$, $b\rightarrow b_1$, $a\rightarrow a_1$, $a'\rightarrow a_2$, $c\rightarrow b_2$, $d\rightarrow c_1$ and $e\rightarrow c_2$ in \eqref{ManochaIntegral-1} leads us again to the integral \eqref{Th-CuriousIntegral-1}.  
\end{remark}

\subsection{A further extension of Theorem \ref{Th-ErdelyiIntegral-FK}}\label{Section-3-3}

As usual, the discrete convolution product of two sequences $\mathbf{a}:=a(m,n)$ and $\mathbf{b}:=b(m,n)$ is defined by
\[
(\mathbf{a}\star\mathbf{b})(m,n)
:=\sum_{i=0}^{m}\sum_{j=0}^{n}a(m-i,n-j)b(i,j). 
\]
Let us consider the function of the following form:
\begin{align}\label{Section-3-3-Def}
	\mathcal{F}^{\mathbf{a}}\left[\begin{matrix}
		\alpha_1,\beta_1:\alpha_2,\beta_2\\
		\gamma_1:\gamma_2
	\end{matrix};x_1,x_2,x_3,x_4\right]
	&:=\sum_{m,n=0}^{\infty}
	a(m,n){}_{2}F_{1}\left[\begin{matrix}
		\alpha_1+m,\beta_1\\
		\gamma_1 
	\end{matrix};x_1\right]\notag \\
	&\hspace{0.5cm} \cdot
	{}_{2}F_{1}\left[\begin{matrix}
		\alpha_2+n,\beta_2\\
		\gamma_2
	\end{matrix};x_2\right]x_3^{m}x_{4}^{n}.
\end{align}
Functions of this type appear frequently in many situations and have  properties similar to Saran's $F_K$-function. 

If we take
\[
a(m,n)=
\begin{cases}
	\displaystyle \frac{(\alpha_1)_n(\alpha_2)_n}{ n!(\gamma_3)_n}, & m=n,\\
	0, & m\neq n,
\end{cases}
\] 
then the function \eqref{Section-3-3-Def} reduces to Saran's function:
\[
F_K\left[\beta_1,\alpha_2,\alpha_2,\alpha_1,\beta_2,\alpha_1;\gamma_1,\gamma_2,\gamma_3;x_1,x_2,x_3x_4\right].
\]

Another significant example originates in physics. In 2018, Belitsky \cite[p. 69]{Belitsky-2018} generalized Saran's $F_K$-function of three variables to $L$ variables, i.e., 
\begin{align}\label{GeneralizedSaranFK}
	&F_K\left[\begin{matrix}
		a_1, b_1, \cdots, b_{L-1}, a_2\\
		c_1, \cdots, c_{L}
	\end{matrix}; z_1,\cdots, z_L\right]\notag\\
	&\hspace{1cm}:=\sum_{n_1,\cdots, n_L=0}^{\infty}
	\frac{\left(a_1\right)_{n_1}\left(b_1\right)_{n_1+n_2}\cdots \left(b_{L-1}\right)_{n_{L-1}+n_{L}}\left(a_2\right)_{n_L}}{\left(c_1\right)_{n_1}\cdots\left(c_L\right)_{n_L}}
	\frac{z_1^{n_1}}{n_1!}
	\cdots
	\frac{z_{L}^{n_L}}{n_L!}.
\end{align}
Then Rosenhaus \cite{Rosenhaus-2019} employed the function \eqref{GeneralizedSaranFK} in his study of $n$-point conformal blocks in the comb channel. More recently, Ferrando \emph{et al.} \cite{Ferrando-Loebbert-Pitters-Stawinski-2025} used a \emph{rescaled version} of this function in evaluating the conformal integral associated with the $n$-point conformal partial wave in the comb channel. Taking $L=4$ in \eqref{GeneralizedSaranFK}, we obtain
\begin{align}\label{GeneralizedSaranFK-L4}
	&F_K\left[\begin{matrix}
		a_1, b_1, b_2, b_3, a_2\\
		c_1, c_2, c_3, c_4
	\end{matrix}; z_1, z_2, z_3, z_4\right]\notag\\
	&\hspace{0.5cm}=\sum_{n_1,n_2,n_3,n_4=0}^{\infty}
	\frac{(a_1)_{n_1}(b_1)_{n_1+n_2}(b_2)_{n_2+n_3} (b_3)_{n_3+n_4}(a_2)_{n_4}}{(c_1)_{n_1}(c_2)_{n_2}(c_3)_{n_3}(c_4)_{n_4}}
	\frac{z_1^{n_1}}{n_1!}
	\frac{z_2^{n_2}}{n_2!}
	\frac{z_3^{n_3}}{n_3!}
	\frac{z_4^{n_4}}{n_4!}\notag\\
	&\hspace{0.5cm}=
	\sum_{n_2,n_3=0}^{\infty}
	\frac{(b_1)_{n_2}(b_2)_{n_2+n_3}(b_3)_{n_3}}{(c_2)_{n_2}(c_3)_{n_3}n_2!n_3!} 
	{}_{2}F_{1}\left[\begin{matrix}
		a_1,b_1+n_2\\
		c_1
	\end{matrix};z_1\right]{}_{2}F_{1}\left[\begin{matrix}
	a_2,b_3+n_3\\
	c_4
	\end{matrix};z_4\right]
	z_2^{n_2}
	z_3^{n_3},
\end{align}
which also has the form of \eqref{Section-3-3-Def}. The region of convergence of \eqref{GeneralizedSaranFK-L4} is determined by
\[
|z_1|<1,~|z_4|<1~~~\text{and}~~~
\frac{|z_2|}{1-|z_1|}+\frac{|z_3|}{1-|z_4|}<1.
\]
Here it is worth mentioning that Khichi has introduced the multivariate hypergeometric functions $H_B^{(n)}$ (\cite[p. 308, Eq. (123)]{Srivastava-Karlsson-Book-1985}), which appears quite similar to the function \eqref{GeneralizedSaranFK}, although they are different in nature.

By using the method from Section \ref{Section-3}, we have the following theorem. 
\begin{theorem}\label{Th-Section-3-3}
	Let $\Re(\tau_j)>\Re(\gamma_j)>0$ $(j=1,\cdots,4)$. Then we have
	\begin{align}\label{Th-Section-3-3-1}
		&\mathcal{F}^{\mathbf{c}}\left[\begin{matrix}
			\alpha_1+\lambda_1,\beta_1:\alpha_2+\lambda_2,\beta_2\\
			\tau_1:\tau_2
		\end{matrix};x_1,x_2,x_3,x_4\right]
		=\int_0^1\int_0^1\int_0^1\int_0^1
		(1-u_1 x_1)^{-\lambda_1}
		(1-u_2 x_2)^{-\lambda_2}\notag\\
		&\hspace{0.5cm} \cdot\mathcal{F}^{\mathbf{a}}\left[\begin{matrix}
			\alpha_1,\beta_1:\alpha_2,\beta_2\\
			\gamma_1:\gamma_2
		\end{matrix};x_1 u_1,x_2 u_2,x_3 u_3,x_4 u_4\right]\notag\\
		&\hspace{0.5cm} \cdot\mathcal{F}^{\mathbf{b}}\left[\begin{matrix}
			\lambda_1,\beta_1-\gamma_1:\lambda_2,\beta_2-\gamma_2\\
			\tau_1-\gamma_1:\tau_2-\gamma_2 
		\end{matrix};\frac{(1-u_1)x_1}{1-u_1 x_1},\frac{(1-u_2)x_2}{1-u_2 x_2},\frac{u_3 x_3}{1-u_1 x_1},\frac{u_4 x_4}{1-u_2 x_2}\right]
		\prod_{j=1}^{4}\mathrm{d}\mu_{\gamma_j,\tau_j-\gamma_j}(u_j),
	\end{align}
	where
	\[
	\mathbf{c}(m,n)=\frac{(\gamma_3)_{m}}{(\tau_3)_{m}}\frac{(\gamma_4)_{n}}{(\tau_4)_{n}}(\mathbf{a}\star\mathbf{b})(m,n). 
	\]
\end{theorem}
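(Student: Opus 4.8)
The plan is to mirror the argument of Section \ref{Section-3}. Denote by $J$ the quadruple integral on the right-hand side of \eqref{Th-Section-3-3-1}, and expand the two inner factors $\mathcal{F}^{\mathbf{a}}$ and $\mathcal{F}^{\mathbf{b}}$ into their defining double series as in \eqref{Section-3-3-Def}, using summation indices $m,n$ for $\mathbf{a}$ and $i,j$ for $\mathbf{b}$. The key observation is that, after this expansion, the integrand separates into a product of four single-variable factors, one for each of $u_1,u_2,u_3,u_4$, so that the quadruple integral factorizes into four one-dimensional integrals against the respective Dirichlet measures $\mathrm{d}\mu_{\gamma_j,\tau_j-\gamma_j}(u_j)$.

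First I would treat the $u_1$- and $u_2$-integrals. Collecting the $u_1$-dependence, namely the prefactor $(1-u_1x_1)^{-\lambda_1}$, the factor $(1-u_1x_1)^{-i}$ arising from the $\mathbf{b}$-series argument $u_3x_3/(1-u_1x_1)$, and the two Gauss functions in $x_1u_1$ and in $(1-u_1)x_1/(1-u_1x_1)$, the $u_1$-integral takes exactly the shape of Erd\'{e}lyi's integral \eqref{ErdelyiIntegral-1} with the substitutions $\alpha'\rightarrow\lambda_1+i$, $\alpha\rightarrow\alpha_1+\lambda_1+m+i$, $\beta\rightarrow\beta_1$, $\lambda\rightarrow\gamma_1$ and $\gamma\rightarrow\tau_1$; hence it evaluates to ${}_{2}F_{1}[\alpha_1+\lambda_1+m+i,\beta_1;\tau_1;x_1]$ under the hypothesis $\Re(\tau_1)>\Re(\gamma_1)>0$. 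An identical computation for $u_2$ produces ${}_{2}F_{1}[\alpha_2+\lambda_2+n+j,\beta_2;\tau_2;x_2]$ under $\Re(\tau_2)>\Re(\gamma_2)>0$.

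Next I would handle the $u_3$- and $u_4$-integrals, which are pure moment integrals: the $u_3$-dependence reduces to the monomial $u_3^{m+i}$ (one factor of $u_3^{m}$ from the $\mathbf{a}$-argument $x_3u_3$ and one of $u_3^{i}$ from the $\mathbf{b}$-argument $u_3x_3/(1-u_1x_1)$), and similarly $u_4^{n+j}$. By the Dirichlet moment formula \eqref{Th-ErdelyiIntegral-FK-Proof-4} these contribute $(\gamma_3)_{m+i}/(\tau_3)_{m+i}$ and $(\gamma_4)_{n+j}/(\tau_4)_{n+j}$, requiring $\Re(\tau_3)>\Re(\gamma_3)>0$ and $\Re(\tau_4)>\Re(\gamma_4)>0$. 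At this stage $J$ is a fourfold series in $m,n,i,j$ in which the powers of $x_3,x_4$ are $x_3^{m+i}x_4^{n+j}$. Setting $M=m+i$ and $N=n+j$ and summing over the inner indices collapses the coefficient to $\frac{(\gamma_3)_M}{(\tau_3)_M}\frac{(\gamma_4)_N}{(\tau_4)_N}\sum_{i=0}^{M}\sum_{j=0}^{N}a(M-i,N-j)b(i,j)$, which is precisely $\mathbf{c}(M,N)$ by the definition of the convolution $\mathbf{a}\star\mathbf{b}$. Comparing with \eqref{Section-3-3-Def} identifies the result as $\mathcal{F}^{\mathbf{c}}$, which establishes \eqref{Th-Section-3-3-1}.

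The main obstacle is not the algebra but the analytic justification of the term-by-term integration. Exactly as in Section \ref{Section-3}, one must verify that for $u_1,u_2,u_3,u_4\in(0,1)$ all the arguments appearing in the integrand, in particular $(1-u_j)x_j/(1-u_jx_j)$ and the rescaled $u_jx_j/(1-u_jx_j)$, remain inside the convergence region of the corresponding functions, so that the multiple series are absolutely and uniformly convergent and Fubini's theorem applies. Since the convergence domain of $\mathcal{F}^{\mathbf{a}}$ depends on the growth of the coefficient sequence $\mathbf{a}$, this verification must be carried out under the standing assumption that the $x_j$ lie in a region where all three $\mathcal{F}$-series and their integrated counterparts converge; for the Belitsky specialization \eqref{GeneralizedSaranFK-L4} this reduces to the explicit inequalities stated there, and the required estimates are entirely parallel to those given following \eqref{Th-ErdelyiIntegral-FK-Proof-2}.
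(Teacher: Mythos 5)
Your proposal is correct and follows essentially the same route as the paper's own proof: expand $\mathcal{F}^{\mathbf{a}}$ and $\mathcal{F}^{\mathbf{b}}$ into their defining series, factor the quadruple integral into two Erd\'{e}lyi integrals \eqref{ErdelyiIntegral-1} (in $u_1,u_2$) and two Dirichlet moment integrals \eqref{Th-ErdelyiIntegral-FK-Proof-4} (in $u_3,u_4$), then collapse the resulting fourfold sum along $M=m+i$, $N=n+j$ into the convolution $\mathbf{a}\star\mathbf{b}$. Your substitutions into \eqref{ErdelyiIntegral-1} and the resulting parameter conditions match the paper's computation exactly (your indices $m,n,i,j$ are the paper's $m_1,m_2,n_1,n_2$), and your closing remarks on convergence and Fubini address a point the paper leaves implicit.
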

\begin{proof}
	Since the proof is very similar to the one given in Section \ref{Section-3}, we omit the details and retain only key steps. 
	Let us denote the right-hand side of \eqref{Th-Section-3-3-1} by $I$. Then, by using \eqref{Section-3-3-Def}, we have 
	\begin{align}
		I&=\sum_{m_1,m_2=0}^{\infty}\sum_{n_1,n_2=0}^{\infty}
		a(m_1,m_2)b(n_1,n_2)x_3^{m_1+n_1}x_4^{m_2+n_2}\notag\\ 
		&\hspace{0.5cm}\cdot
		\int_{0}^{1}(1-u_1 x_1)^{-\lambda_1-n_1}
		{}_{2}F_{1}\left[\begin{matrix}
			\alpha_1+m_1,\beta_1\\
			\gamma_1 
		\end{matrix};x_1 u_1\right]
		{}_{2}F_{1}\left[\begin{matrix}
			\lambda_1+n_1,\beta_1-\gamma_1\\
			\tau_1-\gamma_1
		\end{matrix};\frac{(1-u_1)x_1}{1-u_1 x_1}\right]\mathrm{d}\mu_{\gamma_1,\tau_1-\gamma_1}(u_1)\notag\\
		&\hspace{0.5cm}\cdot 
		\int_{0}^{1}(1-u_2 x_2)^{-\lambda_2-n_2} {}_{2}F_{1}\left[\begin{matrix}
			\alpha_2+m_2,\beta_2\\
			\gamma_2 
		\end{matrix};x_2 u_2\right]
		{}_{2}F_{1}\left[\begin{matrix}
			\lambda_2+n_2,\beta_2-\gamma_2\\
			\tau_2-\gamma_2
		\end{matrix};\frac{(1-u_2)x_2}{1-u_2 x_2}\right]\mathrm{d}\mu_{\gamma_2,\tau_2-\gamma_2}(u_2)\notag\\
		&\hspace{0.5cm}\cdot \int_{0}^{1} u_3^{m_1+n_1}\mathrm{d}\mu_{\gamma_3,\tau_3-\gamma_3}(u_3)
		\int_{0}^{1} u_4^{m_2+n_2}\mathrm{d}\mu_{\gamma_4,\tau_4-\gamma_4}(u_4).
	\end{align}
	By using \eqref{ErdelyiIntegral-1} and \eqref{Th-ErdelyiIntegral-FK-Proof-4}, we obtain
	\begin{align*}
		I&=\sum_{m_1,m_2=0}^{\infty}\sum_{n_1,n_2=0}^{\infty}
		a(m_1,m_2)b(n_1,n_2)x_3^{m_1+n_1}x_4^{m_2+n_2}\frac{(\gamma_3)_{m_1+n_1}}{(\tau_3)_{m_1+n_1}}\frac{(\gamma_4)_{m_2+n_2}}{(\tau_4)_{m_2+n_2}}\\
		&\hspace{1cm}\cdot {}_{2}F_{1}\left[\begin{matrix}
			\alpha_1+\lambda_1+m_1+n_1,\beta_1\\
			\tau_1 
		\end{matrix};x_1\right]
		{}_{2}F_{1}\left[\begin{matrix}
			\alpha_2+\lambda_2+m_2+n_2,\beta_2\\
			\tau_2
		\end{matrix};x_2\right],
	\end{align*}
	which, through the same series manipulation as in Section \ref{Section-3}, leads us to the desired result. 
\end{proof}

\section{Some $q$-Erd\'{e}lyi-type integrals}\label{Section-5}

\subsection{A Joshi-Vyas type theorem}

Inspired by the formula \cite[p. 133]{Joshi-Vyas-2003} of Joshi and Vyas and its useful $q$-analogue \cite[p. 646]{Joshi-Vyas-2006}, we establish--using the measure in \eqref{q-HypergeometricMeasure}--the following theorem of Joshi-Vyas type.

\begin{theorem}\label{Th-JoshVyas-generalization}
	Let $\Phi(z_1,\cdots,z_k)$ be defined by
	\[
	\Phi(z_1,\cdots,z_k):=\sum_{n_1,\cdots,n_k=0}^{\infty}c(n_1,\cdots,n_k)z_1^{n_1}\cdots z_k^{n_k}
	\]
	where $\{c(n_1,\cdots,n_k)\}$ is a suitable sequence so that the series converges absolutely and uniformly in a certain region. Then
	\begin{align}\label{Th-JoshVyas-generalization-1}
		&\sum_{n_1,\cdots,n_k=0}^{\infty}c(n_1,\cdots,n_k)
		\prod_{j=1}^{k}
		\frac{(q^{\nu_j},q^{\lambda_j};q)_{n_j}}{(q^{\gamma_j},q^{\eta_j};q)_{n_j}}z_j^{n_j}\notag\\
		&\hspace{0.5cm}=
		\int_{[0,1]^k}
		\Phi(z_1t_1,\cdots,z_k t_k)
		\prod_{j=1}^{k}\mathrm{d}\mu_{\eta_j-\lambda_j,\gamma_j-\lambda_j,\gamma_j-\lambda_j+\eta_j-\nu_j,\nu_j}(t_j;q).
	\end{align}
\end{theorem}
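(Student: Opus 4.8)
The plan is to reduce \eqref{Th-JoshVyas-generalization-1} to a single scalar \emph{moment formula} for the $q$-hypergeometric measure and then to separate variables. The key ingredient is the power-moment evaluation
\begin{equation}\label{moment-formula-plan}
\int_0^1 t^n\,\mathrm{d}\mu_{\eta-\lambda,\gamma-\lambda,\gamma-\lambda+\eta-\nu,\nu}(t;q)=\frac{(q^{\nu},q^{\lambda};q)_n}{(q^{\gamma},q^{\eta};q)_n}\qquad(n\in\mathbb{Z}_{\geq0}),
\end{equation}
which asserts that the $n$-th moment of the measure \eqref{q-HypergeometricMeasure} is exactly the ratio of $q$-shifted factorials occurring on the left-hand side of \eqref{Th-JoshVyas-generalization-1}; for $n=0$ this is just the normalization $\int_0^1\mathrm{d}\mu_{\eta-\lambda,\gamma-\lambda,\gamma-\lambda+\eta-\nu,\nu}(t;q)=1$ already recorded after \eqref{q-HypergeometricMeasure}.

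To prove \eqref{moment-formula-plan} I would avoid evaluating the $q$-integral of $t^{\eta-1+n}\frac{(tq;q)_\infty}{(tq^\gamma;q)_\infty}\,{}_3\widetilde{\phi}_1[\cdots]$ from scratch and instead read it off from the $q$-Erd\'elyi integral \eqref{eq:2phi1 Erdelyi2}. Expanding both sides of \eqref{eq:2phi1 Erdelyi2} as power series in $x$ via the definitions of ${}_2\widetilde{\phi}_1$ and ${}_3\widetilde{\phi}_2$, the coefficient of $x^n$ on the left equals $(q^{\alpha},q^{\beta};q)_n/(q^{\gamma},q;q)_n$, while on the right it equals $\big[(q^{\alpha},q^{\beta},q^{\eta};q)_n/(q^{\lambda},q^{\nu},q;q)_n\big]\int_0^1 t^n\,\mathrm{d}\mu_{\eta-\lambda,\gamma-\lambda,\gamma-\lambda+\eta-\nu,\nu}(t;q)$. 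Since \eqref{eq:2phi1 Erdelyi2} is a power-series identity in $x$, I may equate these coefficients and cancel the common factor $(q^{\alpha},q^{\beta};q)_n/(q;q)_n$ to obtain \eqref{moment-formula-plan}. The measure parameters here agree verbatim with those of the theorem under the relabelling $\lambda\to\lambda_j$, $\gamma\to\gamma_j$, $\eta\to\eta_j$, $\nu\to\nu_j$, and the positivity conditions $\min\{\Re(\lambda),\Re(\nu),\Re(\gamma+\eta-\lambda-\nu)\}>0$ are exactly those under which the measure is defined.

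With \eqref{moment-formula-plan} available, the remaining computation is routine separation of variables. I would substitute $\Phi(z_1t_1,\ldots,z_kt_k)=\sum_{n_1,\ldots,n_k=0}^{\infty}c(n_1,\ldots,n_k)\prod_{j=1}^{k}(z_jt_j)^{n_j}$ into the right-hand side of \eqref{Th-JoshVyas-generalization-1}, interchange the summation over $(n_1,\ldots,n_k)$ with the $k$-dimensional $q$-integral, and use that both the monomial $\prod_{j}t_j^{n_j}$ and the product measure $\prod_{j}\mathrm{d}\mu_{\cdots}(t_j;q)$ split across the variables $t_1,\ldots,t_k$. The $k$-fold integral then factors as $\prod_{j=1}^{k}\int_0^1 t_j^{n_j}\,\mathrm{d}\mu_{\eta_j-\lambda_j,\gamma_j-\lambda_j,\gamma_j-\lambda_j+\eta_j-\nu_j,\nu_j}(t_j;q)$, and \eqref{moment-formula-plan} turns each factor into $(q^{\nu_j},q^{\lambda_j};q)_{n_j}/(q^{\gamma_j},q^{\eta_j};q)_{n_j}$. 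Recombining with $c(n_1,\ldots,n_k)\prod_{j}z_j^{n_j}$ reproduces the left-hand side of \eqref{Th-JoshVyas-generalization-1} exactly.

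I expect the sole genuine obstacle to be the justification of the interchange of the series over $(n_1,\ldots,n_k)$ with the $q$-integral. By \eqref{Def-q-integral} the $q$-integral is itself an absolutely weighted sum over $\mathbb{Z}_{\geq0}^{k}$, so the interchange is a Tonelli/Fubini statement for a $2k$-fold series, and this is precisely where the standing hypothesis on $\{c(n_1,\ldots,n_k)\}$ enters: absolute and uniform convergence of $\Phi$ on the relevant polydisc makes the double series of absolute values summable, legitimizing the rearrangement. I would also note that every quadrature node $t_j=q^{m}$ lies in $[0,1]$, so $|z_jt_j|\le|z_j|$ keeps all arguments inside the region where $\Phi$ converges; once this is observed, the interchange is valid and the proof is complete.
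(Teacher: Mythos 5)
Your proposal is correct, and it shares the paper's overall skeleton: both arguments reduce \eqref{Th-JoshVyas-generalization-1} to the power-moment formula
\[
\int_0^1 t^{n}\,\mathrm{d}\mu_{\eta-\lambda,\gamma-\lambda,\gamma-\lambda+\eta-\nu,\nu}(t;q)=\frac{(q^{\nu},q^{\lambda};q)_{n}}{(q^{\gamma},q^{\eta};q)_{n}},
\]
and then conclude by expanding $\Phi$, interchanging the sum with the $k$-fold $q$-integral, and letting the product measure factor variable by variable. Where you genuinely diverge is in how the moment formula itself is established. The paper asserts that it ``can be proved by evaluating the $q$-integral directly''--that is, by expanding the ${}_3\phi_1$ factor in the density \eqref{q-HypergeometricMeasure} and resumming--and omits that computation as ``a little complicated''; you instead extract the formula from Gasper's $q$-Erd\'{e}lyi integral \eqref{eq:2phi1 Erdelyi2} by comparing coefficients of $x^{n}$ on both sides and cancelling the common factor $(q^{\alpha},q^{\beta};q)_{n}/(q;q)_{n}$, which is legitimate since $\alpha,\beta$ are free parameters not appearing in the conclusion. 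Your route buys a complete argument with no omitted calculation, and it automatically delivers the correct parameter constraints, because the hypotheses of \eqref{eq:2phi1 Erdelyi2} coincide exactly with the conditions $\min\{\Re(\lambda_j),\Re(\nu_j),\Re(\gamma_j+\eta_j-\lambda_j-\nu_j)\}>0$ under which the measure is defined; the cost is that the underlying computation is not eliminated but outsourced to the cited literature, since Gasper's own proof of \eqref{eq:2phi1 Erdelyi2} rests on essentially the same evaluation, so your proof is less self-contained than the (sketched) direct evaluation would be. Your closing paragraph on the Tonelli/Fubini interchange--observing via \eqref{Def-q-integral} that the $q$-integral is itself an absolutely weighted sum over $\mathbb{Z}_{\geq0}^{k}$ and that the nodes $t_j=q^{m}\in[0,1]$ keep all arguments $z_jt_j$ inside the region of absolute convergence of $\Phi$--supplies a justification the paper passes over in silence, and is exactly right.
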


\begin{proof}
	The result follows by the moment formula
	\[
	\int_0^1 t^\ell \mathrm{d}\mu_{\eta-\lambda,\gamma-\lambda,\gamma-\lambda+\eta-\nu,\nu}(t;q)=\frac{(q^\nu,q^\lambda;q)_\ell}{(q^\gamma,q^\eta;q)_\ell},
	\]
	which can be proved by evaluating the $q$-integral directly. This process, although a little complicated, is quite straightforward; hence we decide to omit it here.
\end{proof}
\begin{remark}
	When $k=1$, the formula \eqref{Th-JoshVyas-generalization-1} reduces to the result obtained by Joshi and Vyas. It can also be considered as Ernst's $q$-analogue for Exton's result (see \cite[p. 189, Theorem 3.1]{Ernst-2018}).
\end{remark}
Letting $k=3$ and  
\[
c(m,n,p)=(q^{\alpha_2};q)_{n+p}(q^{\beta_1};q)_{p+m}\frac{(q^{\alpha_1},q^{\eta_1};q)_m(q^{\beta_2},q^{\eta_2};q)_n(q^{\eta_3};q)_p}{(q^{\nu_1},q^{\lambda_1},q;q)_m(q^{\nu_2},q^{\lambda_2},q;q)_n(q^{\lambda_3},q^{\eta_3},q;q)_p}
\]
in Theorem \ref{Th-JoshVyas-generalization} gives the following corollary. 

\begin{corollary}\label{Cor-JoshVyas-generalization-1}
When $\min\{\Re(\gamma_i+\eta_i-\lambda_i-\nu_i), \Re(\lambda_i), \Re(\nu_i)\}>0$ $(i=1,2,3)$, we have
\begin{align}\label{Cor-JoshVyas-generalization-1-1}
	&\widetilde{\Phi}_K[\alpha_1, \alpha_2, \alpha_2, \beta_1, \beta_2, \beta_1, \gamma_1, \gamma_2, \gamma_3; q, x, y, z] \notag\\
	&\hspace{0.5cm}=\int_{[0,1]^3} \widetilde{\phi}^{(3)} \left[\begin{matrix}
			-:: -; \alpha_2; \beta_1: \alpha_1, \eta_1; \beta_2, \eta_2; \eta_3 \\
			-:: -; -; -: \nu_1, \lambda_1; \nu_2, \lambda_2; \nu_3,\lambda_3
	\end{matrix}; q, xt_1, yt_2, zt_3\right] \notag\\
	&\hspace{1.5cm}\cdot \prod_{j=1}^{3}\mathrm{d}\mu_{\eta_j-\lambda_j,\gamma_j-\lambda_j,\gamma_j-\lambda_j+\eta_j-\nu_j,\nu_j}(t_j;q),
\end{align}
where $\phi^{(3)}$ is defined by \eqref{Def-q-TripleSeries}.
\end{corollary}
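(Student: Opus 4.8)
The plan is to obtain Corollary~\ref{Cor-JoshVyas-generalization-1} as a direct specialization of Theorem~\ref{Th-JoshVyas-generalization} with $k=3$. First I would set $z_1=x$, $z_2=y$, $z_3=z$ and feed in the prescribed sequence $c(m,n,p)$, so that the abstract identity \eqref{Th-JoshVyas-generalization-1} delivers both sides of the corollary simultaneously. The two remaining tasks are purely a matter of identifying the two $q$-hypergeometric objects that appear: the generating function $\Phi$ on the right and the weighted triple sum on the left.

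For the right-hand side, I would verify that $\Phi(xt_1,yt_2,zt_3)=\sum_{m,n,p\ge0}c(m,n,p)(xt_1)^m(yt_2)^n(zt_3)^p$ coincides with the triple $q$-series $\widetilde{\phi}^{(3)}$ written in the corollary. This is a bookkeeping step against \eqref{Def-q-TripleSeries}: the factor $(q^{\alpha_2};q)_{n+p}$ occupies the $(n+p)$-slot $(b')$, the factor $(q^{\beta_1};q)_{p+m}$ occupies the $(p+m)$-slot $(b'')$, the single-index numerator parameters $q^{\alpha_1},q^{\eta_1}$, then $q^{\beta_2},q^{\eta_2}$, then $q^{\eta_3}$ fill the slots $(c),(c'),(c'')$, the denominator parameters $q^{\nu_j},q^{\lambda_j}$ fill $(h),(h'),(h'')$, and the factors $(q;q)_m,(q;q)_n,(q;q)_p$ are exactly those already built into \eqref{Def-q-TripleSeries}. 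Matching these slot by slot produces the stated $\widetilde{\phi}^{(3)}$.

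For the left-hand side, I would substitute $c(m,n,p)$ into $\sum c(m,n,p)\prod_{j=1}^{3}\frac{(q^{\nu_j},q^{\lambda_j};q)_{n_j}}{(q^{\gamma_j},q^{\eta_j};q)_{n_j}}z_j^{n_j}$ and carry out the cancellation of the auxiliary $q$-shifted factorials. The factors $(q^{\nu_j};q)_{n_j}$ and $(q^{\lambda_j};q)_{n_j}$ inserted by the theorem cancel the corresponding denominator factors of $c$, while each $(q^{\eta_j};q)_{n_j}$ sitting in the numerator of $c$ cancels the $(q^{\eta_j};q)_{n_j}$ placed in the inserted denominator. What survives is precisely
\[
\sum_{m,n,p\ge0}\frac{(q^{\alpha_1};q)_m(q^{\alpha_2};q)_{n+p}(q^{\beta_1};q)_{m+p}(q^{\beta_2};q)_n}{(q^{\gamma_1},q;q)_m(q^{\gamma_2},q;q)_n(q^{\gamma_3},q;q)_p}\,x^my^nz^p,
\]
which is $\widetilde{\Phi}_K$ by \eqref{Def-qFK}.

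Finally, I would confirm that the stated hypotheses are exactly the domain requirements of the four-parameter $q$-measures appearing in \eqref{Th-JoshVyas-generalization-1}. With indices $(\eta_j-\lambda_j,\gamma_j-\lambda_j,\gamma_j-\lambda_j+\eta_j-\nu_j,\nu_j)$, the positivity condition $\min\{\Re(\eta),\Re(\gamma),\Re(\eta+\gamma-\alpha-\beta)\}>0$ imposed on $\mathfrak{m}_{\alpha,\beta,\gamma,\eta}(t;q)$ unwinds to $\min\{\Re(\nu_j),\Re(\gamma_j+\eta_j-\lambda_j-\nu_j),\Re(\lambda_j)\}>0$, which is the hypothesis of the corollary. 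I do not anticipate any genuine obstacle: Theorem~\ref{Th-JoshVyas-generalization} already supplies the absolute and uniform convergence that justifies interchanging the triple sum with the $q$-integration, so the only real work is the careful matching of the $\phi^{(3)}$ parameter slots and the factorial cancellations, both of which are routine.
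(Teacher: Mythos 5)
Your proposal is correct and is essentially identical to the paper's proof, which simply specializes Theorem~\ref{Th-JoshVyas-generalization} at $k=3$ with the coefficient sequence read off from the integrand and invokes the moment formula, so your slot-matching, factorial cancellations, and translation of the measure conditions into $\min\{\Re(\gamma_j+\eta_j-\lambda_j-\nu_j),\Re(\lambda_j),\Re(\nu_j)\}>0$ are exactly the bookkeeping the paper omits. One remark: your implicit choice of the $p$-denominator $(q^{\nu_3},q^{\lambda_3},q;q)_p$ in $c(m,n,p)$ is the one consistent with the displayed $\widetilde{\phi}^{(3)}$ (whose third lower slot is $\nu_3,\lambda_3$) and with the later reduction to Corollary~\ref{Cor-JoshVyas-generalization-2}, whereas the paper prints $(q^{\lambda_3},q^{\eta_3},q;q)_p$ there --- an apparent typo that your version silently corrects.
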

\begin{remark}
	This integral \eqref{Cor-JoshVyas-generalization-1-1} is clearly \textcolor{blue}{a} $q$-analogue of the integral by Luo \emph{et al.} \cite[Theorem 1]{Luo-Xu-Raina-2022}. In particular, when $x=0$, \eqref{Cor-JoshVyas-generalization-1-1} reduces to
	 \begin{align}
	 	\widetilde{\Phi}_2[\alpha_2, \beta_2, \beta_1; \gamma_2, \gamma_3; q, y, z]
	 	&=\int_{[0,1]^2} \widetilde{\Phi}_{0:2;2}^{1:2;2} \left[\begin{matrix}
	 		\alpha_2:\beta_2, \eta_2; \beta_1, \eta_3 \\
	 		     - : \nu_2, \lambda_2; \nu_3,\lambda_3
	 	\end{matrix}; q, yt_2, zt_3\right] \notag\\
	 	&\hspace{1.5cm}\cdot \prod_{j=2}^{3}\mathrm{d}\mu_{\eta_j-\lambda_j,\gamma_j-\lambda_j,\gamma_j-\lambda_j+\eta_j-\nu_j,\nu_j}(t_j;q),
	 \end{align}
	 which is the $q$-analogue of Koschmieder's integral representation for $F_2$ (\cite[Corollary 1]{Luo-Xu-Raina-2022}). Here, $\Phi_2$ and $\Phi_{0:2;2}^{1:2;2}$ denote the $q$-Appell function \cite[p. 283, Eq. (10.2.6)]{Gasper-Rahman-2004} and the $q$-Kamp\'{e} de F\'{e}riet function \cite[p. 349]{Srivastava-Karlsson-Book-1985}, respectively.  
\end{remark}

Letting $\lambda_1=\alpha_1$, $\lambda_2=\beta_2$ and $\lambda_3=\eta_3$ in \eqref{Cor-JoshVyas-generalization-1-1}, we obtain the following corollary. 
\begin{corollary}\label{Cor-JoshVyas-generalization-2}
	When $\min\{\Re(\gamma_1+\eta_1-\alpha_1-\nu_1), \Re(\alpha_1), \Re(\nu_1)\}>0$, $\min\{\Re(\gamma_2+\eta_2-\beta_2-\nu_2), \Re(\beta_2), \Re(\nu_2)\}>0$ and $\Re(\gamma_3)>\Re(\nu_3)>0$, we have
	\begin{align}\label{Cor-JoshVyas-generalization-2-1}
		&\widetilde{\Phi}_K[\alpha_1, \alpha_2, \alpha_2, \beta_1, \beta_2, \beta_1, \gamma_1, \gamma_2, \gamma_3; q, x, y, z] \notag\\
		&\hspace{0.5cm}=\int_{[0,1]^3}
		\widetilde{\Phi}_K[\eta_1,\alpha_2,\alpha_2,\beta_1,\eta_2, \beta_1;\nu_1,\nu_2,\nu_3 
		; q, xt_1, yt_2, zt_3] \notag\\
		&\hspace{1.2cm}\cdot \mathrm{d}\mu_{\eta_1-\alpha_1,\gamma_1-\alpha_1,\gamma_1-\alpha_1+\eta_1-\nu_1,\nu_1}(t_1;q)
		\mathrm{d}\mu_{\eta_2-\beta_2,\gamma_2-\beta_2,\gamma_2-\beta_2+\eta_2-\nu_2,\nu_2}(t_2;q)
		\mathrm{d}\mu_{\nu_3,\gamma_3-\nu_3}(t_3;q).
	\end{align}
\end{corollary}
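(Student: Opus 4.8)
The plan is to obtain \eqref{Cor-JoshVyas-generalization-2-1} as a direct specialization of Corollary \ref{Cor-JoshVyas-generalization-1}, so the proof reduces to tracking how the choices $\lambda_1=\alpha_1$, $\lambda_2=\beta_2$, $\lambda_3=\eta_3$ simplify both the series in the integrand and the three measures. First I would substitute these values into \eqref{Cor-JoshVyas-generalization-1-1} and inspect the resulting $\widetilde{\phi}^{(3)}$. Reading off its parameters from the definition \eqref{Def-q-TripleSeries}, the factors attached to the single summation indices $m$, $n$, $p$ are
\[
\frac{(q^{\alpha_1},q^{\eta_1};q)_m}{(q^{\nu_1},q^{\lambda_1},q;q)_m},\qquad
\frac{(q^{\beta_2},q^{\eta_2};q)_n}{(q^{\nu_2},q^{\lambda_2},q;q)_n},\qquad
\frac{(q^{\eta_3};q)_p}{(q^{\nu_3},q^{\lambda_3},q;q)_p},
\]
while the paired factors $(q^{\alpha_2};q)_{n+p}$ and $(q^{\beta_1};q)_{p+m}$ are untouched. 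Putting $\lambda_1=\alpha_1$, $\lambda_2=\beta_2$, $\lambda_3=\eta_3$ makes the numerator factors $(q^{\alpha_1};q)_m$, $(q^{\beta_2};q)_n$, $(q^{\eta_3};q)_p$ cancel against the matching denominator factors, leaving
\[
\frac{(q^{\eta_1};q)_m}{(q^{\nu_1},q;q)_m},\qquad
\frac{(q^{\eta_2};q)_n}{(q^{\nu_2},q;q)_n},\qquad
\frac{1}{(q^{\nu_3},q;q)_p}.
\]
Comparing with \eqref{Def-qFK}, this collapses the $\widetilde{\phi}^{(3)}$ into $\widetilde{\Phi}_K[\eta_1,\alpha_2,\alpha_2,\beta_1,\eta_2,\beta_1;\nu_1,\nu_2,\nu_3;q,xt_1,yt_2,zt_3]$, which is exactly the integrand on the right of \eqref{Cor-JoshVyas-generalization-2-1}.

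Next I would handle the three measures. Setting $\lambda_1=\alpha_1$ and $\lambda_2=\beta_2$ turns the first two measures of \eqref{Cor-JoshVyas-generalization-1-1} verbatim into $\mathrm{d}\mu_{\eta_1-\alpha_1,\gamma_1-\alpha_1,\gamma_1-\alpha_1+\eta_1-\nu_1,\nu_1}(t_1;q)$ and $\mathrm{d}\mu_{\eta_2-\beta_2,\gamma_2-\beta_2,\gamma_2-\beta_2+\eta_2-\nu_2,\nu_2}(t_2;q)$. For the third measure, the choice $\lambda_3=\eta_3$ forces its first index $\eta_3-\lambda_3=0$, so I would invoke the $q$-analogue of the reduction $\mathfrak{m}_{0,\beta,\gamma,\eta}=\mathfrak{m}_{\eta,\gamma}$: in $\mathfrak{m}_{\alpha,\beta,\gamma,\eta}(t;q)$ the choice $\alpha=0$ gives $q^{\alpha}=1$, hence $(q^{\alpha};q)_k$ vanishes for $k\ge1$ and the ${}_3\phi_1$ collapses to its $k=0$ term, namely $1$; simultaneously the $\Gamma_q$-prefactor telescopes to $\Gamma_q(\eta+\gamma)/\bigl(\Gamma_q(\eta)\Gamma_q(\gamma)\bigr)$. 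Thus $\mathfrak{m}_{0,\gamma_3-\eta_3,\gamma_3-\nu_3,\nu_3}(t_3;q)=\mathfrak{m}_{\nu_3,\gamma_3-\nu_3}(t_3;q)$, and the third measure becomes the $q$-Dirichlet measure $\mathrm{d}\mu_{\nu_3,\gamma_3-\nu_3}(t_3;q)$ displayed in \eqref{Cor-JoshVyas-generalization-2-1}.

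Finally I would specialize the parameter constraints. The $i=1,2$ inequalities of Corollary \ref{Cor-JoshVyas-generalization-1} become, under $\lambda_1=\alpha_1$ and $\lambda_2=\beta_2$, the first two conditions of Corollary \ref{Cor-JoshVyas-generalization-2}. For $i=3$ the constraint $\min\{\Re(\gamma_3+\eta_3-\lambda_3-\nu_3),\Re(\lambda_3),\Re(\nu_3)\}>0$ reads $\min\{\Re(\gamma_3-\nu_3),\Re(\eta_3),\Re(\nu_3)\}>0$; since after the reduction the third measure and the integrand no longer depend on $\eta_3$, the requirement $\Re(\eta_3)>0$ is redundant and may be dropped, leaving just $\Re(\gamma_3)>\Re(\nu_3)>0$. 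This yields \eqref{Cor-JoshVyas-generalization-2-1} in full. The only point demanding genuine care is the cancellation bookkeeping in the $\widetilde{\phi}^{(3)}$ together with the measure reduction; once the parameters of \eqref{Def-q-TripleSeries} are matched correctly both are routine, so I anticipate no serious obstacle.
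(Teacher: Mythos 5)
Your proposal is correct and is exactly the paper's route: the paper obtains Corollary \ref{Cor-JoshVyas-generalization-2} by setting $\lambda_1=\alpha_1$, $\lambda_2=\beta_2$, $\lambda_3=\eta_3$ in \eqref{Cor-JoshVyas-generalization-1-1}, leaving the cancellation in the $\widetilde{\phi}^{(3)}$, the reduction $\mathfrak{m}_{0,\gamma_3-\eta_3,\gamma_3-\nu_3,\nu_3}(t;q)=\mathfrak{m}_{\nu_3,\gamma_3-\nu_3}(t;q)$, and the disposal of the now-vacuous condition $\Re(\eta_3)>0$ implicit, all of which you verify correctly.
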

\begin{remark}
	This integral is the $q$-analogue of the integral by Luo and Raina \cite[p. 14, Theorem 4.1]{Luo-Raina-2021}. 
\end{remark}

In the next subsection, we shall show that Corollary \ref{Cor-JoshVyas-generalization-2} has a nice discrete analogue. 

\subsection{A discrete $q$-analogue of Corollary \ref{Cor-JoshVyas-generalization-2}}

Based on Gasper's formula \eqref{Gasper-DiscreteAnalog}, we obtain the following results.
\begin{theorem}\label{Th-DiscreteAnalogCor}
	\begin{align}\label{Th-DiscreteAnalogCor-1}
		&\phi^{(3)}\left[\begin{matrix}
			-:: -; q^{\alpha_2}; q^{\beta_1}: q^{\alpha_1}, q^{-r}; q^{\beta_2}, q^{-s}; q^{-t} \\
			-:: -; -; -: q^{\gamma_1}, \delta_1; q^{\gamma_2}, \delta_2; q^{\gamma_3}, \delta_3
		\end{matrix}; q, q, q, q\right]\notag\\
		&=\sum_{i=0}^r \sum_{j=0}^s \sum_{k=0}^t
		w_1(i,r;q)w_2(j,s;q)w_3(k,t;q)
		 \notag\\
		&\hspace{0.5cm}\cdot\phi^{(3)}\left[\begin{matrix}
			-:: -; q^{\alpha_2}; q^{\beta_1}: q^{\lambda_1}, q^{-i}; q^{\lambda_2}, q^{-j}; q^{-k} \\
			-:: -; -; -: q^{\mu_1}, \delta_1; q^{\mu_2}, \delta_2; q^{\mu_3}, \delta_3
		\end{matrix}; q, q, q, q\right],
	\end{align}
	where
	\begin{align*}
		w_1(i,r;q)
		&:=\frac{(q^{\alpha_1},q;q)_r}{(q^{\gamma_1},q^{\lambda_1};q)_r}
		\frac{(q^{\gamma_1+\lambda_1-\alpha_1-\mu_1};q)_{r-i}}{(q;q)_{r-i}}
		\frac{(q^{\mu_1};q)_i}{(q;q)_i}\\
		&\hspace{0.5cm}\cdot{}_3\phi_2\left[\begin{matrix}
			q^{\lambda_1-\alpha_1}, q^{\gamma_1-\alpha_1}, q^{i-r} \\
			q^{\gamma_1+\lambda_1-\alpha_1-\mu_1}, q^{1-r-\alpha_1}
		\end{matrix}; q, q^{1-i-\mu_1}\right]q^{(r-i)\mu_1},\\
		w_2(j,s;q)
		&:=\frac{(q^{\beta_2},q;q)_s}{(q^{\gamma_2},q^{\lambda_2};q)_s}\frac{(q^{\gamma_2+\lambda_2-\beta_2-\mu_2};q)_{s-j}}{(q;q)_{s-j}}\frac{(q^{\mu_2};q)_j}{(q;q)_j}\\
		&\hspace{0.5cm}\cdot{}_3\phi_2\left[\begin{matrix}
			q^{\lambda_2-\beta_2}, q^{\gamma_2-\beta_2}, q^{j-s} \\
			q^{\gamma_2+\lambda_2-\beta_2-\mu_2}, q^{1-s-\beta_2}
		\end{matrix}; q, q^{1-j-\mu_2}\right]q^{(s-j)\mu_2}
	\end{align*}
	and 
	\[
	w_3(k,t;q):=\frac{(q;q)_t}{(q^{\gamma_3};q)_t}\frac{(q^{\gamma_3-\mu_3};q)_{t-k}}{(q;q)_{t-k}}\frac{(q^{\mu_3};q)_k}{(q;q)_k}q^{(t-k)\mu_3}.
	\]
	
\end{theorem}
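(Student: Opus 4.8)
The plan is to derive \eqref{Th-DiscreteAnalogCor-1} by applying Gasper's discrete transformation \eqref{Gasper-DiscreteAnalog} separately in each of the three summation variables of $\phi^{(3)}$, mirroring the three measures of Corollary \ref{Cor-JoshVyas-generalization-2} (two full hypergeometric measures and one plain Dirichlet measure). The enabling observation is that, in \eqref{Gasper-DiscreteAnalog}, the entire weight attached to the inner ${}_4\phi_3$---namely the prefactor $(q,\lambda;q)_n/(\gamma,\mu;q)_n$, the summand $(\nu;q)_k(\gamma\mu/(\lambda\nu);q)_{n-k}\nu^{n-k}/[(q;q)_k(q;q)_{n-k}]$, and the connecting ${}_3\phi_2$---depends only on $\gamma,\lambda,\mu,\nu,n,k$ and is \emph{independent} of the two numerator parameters $\alpha,\beta$. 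Hence if one channel's inner ${}_3\phi_2$ carries the remaining two summation indices merely as shifted spectator numerator parameters, Gasper's weight is unaffected and the transformation passes through the triple sum cleanly.

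First I would record a single-channel identity in the $m$-direction. Freezing $n,p$ and using $(q^{\beta_1};q)_{m+p}=(q^{\beta_1};q)_p(q^{\beta_1+p};q)_m$, the inner $m$-sum of the left-hand $\phi^{(3)}$ is the terminating ${}_3\phi_2[q^{\beta_1+p},q^{\alpha_1},q^{-r};q^{\gamma_1},\delta_1;q,q]$. Applying \eqref{Gasper-DiscreteAnalog} with the dictionary $(\alpha,\beta,\gamma,\delta,n)=(q^{\beta_1+p},q^{\alpha_1},q^{\gamma_1},\delta_1,r)$ and free parameters $(\lambda,\mu,\nu)=(q^{\alpha_1},q^{\lambda_1},q^{\mu_1})$ reproduces verbatim the prefactor, summand, and front ${}_3\phi_2$ of $w_1(i,r;q)$; moreover, since Gasper's $\lambda=q^{\alpha_1}$ coincides with the numerator $\beta=q^{\alpha_1}$, the terminal ${}_4\phi_3$ loses a parameter and collapses to ${}_3\phi_2[q^{\beta_1+p},q^{\lambda_1},q^{-i};q^{\mu_1},\delta_1;q,q]$. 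Because $w_1$ does not see the spectator $q^{\beta_1+p}$, it is independent of $p$ and factors out of the $n,p$-sum, giving the $m$-channel identity that replaces the block $(q^{\alpha_1},q^{-r};q^{\gamma_1})$ by $\sum_{i}w_1(i,r;q)$ times $(q^{\lambda_1},q^{-i};q^{\mu_1})$. The $n$-channel is handled identically under the dictionary $(\alpha,\beta;\lambda,\mu,\nu)=(q^{\alpha_2+p},q^{\beta_2};q^{\beta_2},q^{\lambda_2},q^{\mu_2})$, producing $w_2(j,s;q)$.

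The $p$-channel requires a degenerate specialization. Using $(q^{\alpha_2};q)_{n+p}=(q^{\alpha_2};q)_n(q^{\alpha_2+n};q)_p$ and $(q^{\beta_1};q)_{m+p}=(q^{\beta_1};q)_m(q^{\beta_1+m};q)_p$, the inner $p$-sum is ${}_3\phi_2[q^{\alpha_2+n},q^{\beta_1+m},q^{-t};q^{\gamma_3},\delta_3;q,q]$, in which \emph{both} numerator parameters are spectators. Here I would apply \eqref{Gasper-DiscreteAnalog} with $\nu=q^{\mu_3}$, $\gamma=q^{\gamma_3}$, $\delta=\delta_3$, $n=t$, and the two free parameters made to coincide, $\mu=\lambda$. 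Then the front ${}_3\phi_2$ has top entry $\mu/\lambda=1$ and reduces to unity, the prefactor collapses to $(q;q)_t/(q^{\gamma_3};q)_t$, and the summand becomes $(q^{\mu_3};q)_k(q^{\gamma_3-\mu_3};q)_{t-k}q^{(t-k)\mu_3}/[(q;q)_k(q;q)_{t-k}]$---precisely $w_3(k,t;q)$; the terminal ${}_4\phi_3$ again collapses (via the cancelling $\lambda=\mu$) to ${}_3\phi_2[q^{\alpha_2+n},q^{\beta_1+m},q^{-k};q^{\mu_3},\delta_3;q,q]$.

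Finally, since each weight $w_1,w_2,w_3$ is independent of the indices and parameters of the other two channels, the three single-channel identities commute and compose directly into \eqref{Th-DiscreteAnalogCor-1}; all sums terminate, so no convergence or interchange issues arise. I expect the main obstacle to be purely the bookkeeping: pinning down the three parameter dictionaries so that the prefactor, summand, and connecting ${}_3\phi_2$ of \eqref{Gasper-DiscreteAnalog} match $w_1,w_2,w_3$ exactly, and in particular recognizing the $p$-channel as the degenerate case $\mu=\lambda$ in which the connecting series trivializes and the terminal ${}_4\phi_3$ simultaneously drops to a ${}_3\phi_2$. The conceptual crux that legitimizes the whole channel-by-channel reduction is the independence of Gasper's weight from $\alpha,\beta$.
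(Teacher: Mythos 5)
Your proposal is correct and follows essentially the same route as the paper's proof: three applications of Gasper's formula \eqref{Gasper-DiscreteAnalog} with exactly the parameter dictionaries you identify (taking $\lambda$ equal to a numerator parameter in the $m$- and $n$-channels so the terminal ${}_4\phi_3$ collapses to a ${}_3\phi_2$, and the degenerate choice $\mu=\lambda$ in the $p$-channel so the connecting ${}_3\phi_2$ trivializes and the weight becomes $w_3$). The only difference is orientation---the paper starts from the right-hand side $S$, collapses the $i,j,k$-sums via Gasper's identity, and re-expands to recover the left-hand $\phi^{(3)}$, whereas you expand the left-hand side channel by channel---which is an immaterial reversal of the same argument.
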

\begin{proof}
For convenience, we denote the right-hand side of \eqref{Th-DiscreteAnalogCor-1} by $S$. 
By \eqref{Def-q-TripleSeries}, it is not difficult to see that
\begin{align*}
	&\phi^{(3)}\left[\begin{matrix}
		-:: -; q^{\alpha_2}; q^{\beta_1}: q^{\lambda_1}, q^{-i}; q^{\lambda_2}, q^{-j}; q^{-k} \\
		-:: -; -; -: q^{\mu_1}, \delta_1; q^{\mu_2}, \delta_2; q^{\mu_3}, \delta_3
	\end{matrix}; q, q, q, q\right] \\
	&\hspace{0.5cm}=\sum_{p=0}^{k}
		\frac{(q^{\alpha_2},q^{\beta_1},q^{-k};q)_p}{(q^{\mu_3},\delta_3,q;q)_p} q^p
		{}_3\phi_2\left[\begin{matrix}
			q^{\beta_1+p}, q^{\lambda_1}, q^{-i} \\
			q^{\mu_1}, \delta_1
		\end{matrix}; q,q \right] 
		{}_3\phi_2 \left[ \begin{matrix}
			q^{\alpha_2+p}, q^{\lambda_2}, q^{-j} \\
			q^{\mu_2}, \delta_2
		\end{matrix};q,q \right].
\end{align*}
Then
\begin{align}
	S&=\sum_{i=0}^r \sum_{j=0}^s \sum_{k=0}^t
	w_1(i,r;q)w_2(j,s;q)w_3(k,t;q)
		\sum_{p=0}^{k}
		\frac{(q^{\alpha_2},q^{\beta_1},q^{-k};q)_p}{(q^{\mu_3},\delta_3,q;q)_p} q^p\notag\\
		&\hspace{0.5cm}\cdot {}_3\phi_2\left[\begin{matrix}
			q^{\beta_1+p}, q^{\lambda_1}, q^{-i} \\
			q^{\mu_1}, \delta_1
		\end{matrix}; q,q \right] 
		{}_3\phi_2 \left[ \begin{matrix}
			q^{\alpha_2+p}, q^{\lambda_2}, q^{-j} \\
			q^{\mu_2}, \delta_2
		\end{matrix};q,q \right]\notag\\
	&=\sum_{k=0}^t w_3(k,t;q)
	\sum_{p=0}^{k}
	\frac{(q^{\alpha_2},q^{\beta_1},q^{-k};q)_p}{(q^{\mu_3},\delta_3,q;q)_p} q^p\notag\\
	&\hspace{0.5cm}
	\sum_{i=0}^r 
	w_1(i,r;q){}_3\phi_2\left[\begin{matrix}
		q^{\beta_1+p}, q^{\lambda_1}, q^{-i} \\
		q^{\mu_1}, \delta_1
	\end{matrix}; q,q \right] \notag\\
	&\hspace{0.5cm}\sum_{j=0}^s
	w_2(j,s;q)
	{}_3\phi_2 \left[ \begin{matrix}
		q^{\alpha_2+p}, q^{\lambda_2}, q^{-j} \\
		q^{\mu_2}, \delta_2
	\end{matrix};q,q \right].\label{Th-DiscreteAnalogCor-Proof-1}
\end{align}

Letting $\alpha\rightarrow q^{\alpha_1}$, $\beta\rightarrow q^{\beta_1+p}$, $\gamma\rightarrow q^{\gamma_1}$, $\delta\rightarrow\delta_1$, $\mu\rightarrow q^{\lambda_1}$, $\lambda\rightarrow q^{\alpha_1}$ and $\nu\rightarrow q^{\mu_1}$ in \eqref{Gasper-DiscreteAnalog}, we have
\begin{align}\label{Th-DiscreteAnalogCor-Proof-3}
\sum_{i=0}^r 
w_1(i,r;q){}_3\phi_2\left[\begin{matrix}
	q^{\beta_1+p}, q^{\lambda_1}, q^{-i} \\
	q^{\mu_1}, \delta_1
\end{matrix}; q,q \right]
&={}_3\phi_2 \left[ \begin{matrix}
	q^{\alpha_1}, q^{\beta_1+p}, q^{-r} \\
	q^{\gamma_1}, \delta_1
\end{matrix};q,q \right]\notag\\
&=\sum_{i=0}^{r}\frac{(q^{\alpha_1},q^{\beta_1},q^{-r};q)_i}{(q^{\gamma_1},\delta_1,q;q)_i}\frac{(q^{\beta_1+i};q)_p}{(q^{\beta_1};q)_p}q^i.
\end{align}
Letting $\alpha\rightarrow q^{\alpha_2+p}$, $\beta\rightarrow q^{\beta_2}$, $\gamma\rightarrow q^{\gamma_2}$, $\delta\rightarrow\delta_2$, $\mu\rightarrow q^{\lambda_2}$, $\lambda\rightarrow q^{\beta_2}$ and $\nu\rightarrow q^{\mu_2}$ in \eqref{Gasper-DiscreteAnalog}, we obtain
\begin{align}\label{Th-DiscreteAnalogCor-Proof-4}
\sum_{j=0}^s
w_2(j,s;q)
{}_3\phi_2 \left[ \begin{matrix}
	q^{\alpha_2+p}, q^{\lambda_2}, q^{-j} \\
	q^{\mu_2}, \delta_2
\end{matrix};q,q \right]
&={}_3\phi_2 \left[ \begin{matrix}
	q^{\alpha_2+p}, q^{\beta_2}, q^{-s} \\
	q^{\gamma_2}, \delta_2
\end{matrix};q,q \right]\notag\\
&=\sum_{j=0}^{s}\frac{(q^{\alpha_2},q^{\beta_2},q^{-s};q)_j}{(q^{\gamma_2},\delta_2,q;q)_j}\frac{(q^{\alpha_2+j};q)_p}{(q^{\alpha_2};q)_p}q^j.
\end{align}

Now, substituting \eqref{Th-DiscreteAnalogCor-Proof-3} and \eqref{Th-DiscreteAnalogCor-Proof-4} into \eqref{Th-DiscreteAnalogCor-Proof-1} gives
\begin{align}\label{Th-DiscreteAnalogCor-Proof-5}
	S&=\sum_{i=0}^{r}\frac{(q^{\alpha_1},q^{\beta_1},q^{-r};q)_i}{(q^{\gamma_1},\delta_1,q;q)_i}q^i
	\sum_{j=0}^{s}\frac{(q^{\alpha_2},q^{\beta_2},q^{-s};q)_j}{(q^{\gamma_2},\delta_2,q;q)_j}q^j\notag\\
	&\hspace{0.5cm}\sum_{k=0}^t w_3(k,t;q)
	\sum_{p=0}^{k}
	\frac{(q^{\alpha_2+j},q^{\beta_1+i},q^{-k};q)_p}{(q^{\mu_3},\delta_3,q;q)_p} q^p.
\end{align}
Letting $\alpha\rightarrow q^{\alpha_2+j}$, $\beta\rightarrow q^{\beta_1+i}$, $\gamma\rightarrow q^{\gamma_3}$, $\delta\rightarrow\delta_3$, $\mu=\lambda$ and $\nu\rightarrow q^{\mu_3}$ in \eqref{Gasper-DiscreteAnalog} gives
\begin{equation}\label{Th-DiscreteAnalogCor-Proof-6}
\sum_{k=0}^t w_3(k,t;q)
\sum_{p=0}^{k}
\frac{(q^{\alpha_2+j},q^{\beta_1+i},q^{-k};q)_p}{(q^{\mu_3},\delta_3,q;q)_p} q^p
={}_{3}\phi_{2}\left[\begin{matrix}
	q^{\alpha_2+j}, q^{\beta_1+i}, q^{-n}\\
	q^{\gamma_3},\delta_3 
\end{matrix};q,q\right].
\end{equation}
Substituting \eqref{Th-DiscreteAnalogCor-Proof-6} into \eqref{Th-DiscreteAnalogCor-Proof-5}, we obtain
\begin{align*}
	S&=\sum_{i=0}^{r}\frac{(q^{\alpha_1},q^{\beta_1},q^{-r};q)_i}{(q^{\gamma_1},\delta_1,q;q)_i}q^i
	\sum_{j=0}^{s}\frac{(q^{\alpha_2},q^{\beta_2},q^{-s};q)_j}{(q^{\gamma_2},\delta_2,q;q)_j}q^j\sum_{k=0}^t \frac{(q^{\alpha_2+j},q^{\beta_1+i},q^{-t};q)_k}{(q^{\gamma_3},\delta_3,q;q)_k}q^k\\
	&=\sum_{i=0}^{r}\sum_{j=0}^{s}\sum_{k=0}^{t}
	(q^{\alpha_2};q)_{j+k}
	(q^{\beta_1};q)_{k+i}
	\frac{(q^{\alpha_1},q^{-r};q)_i}{(q^{\gamma_1},\delta_1,q;q)_i}
	\frac{(q^{\beta_2},q^{-s};q)_j}{(q^{\gamma_2},\delta_2,q;q)_j} \frac{(q^{-t};q)_k}{(q^{\gamma_3},\delta_3,q;q)_k}q^{i+j+k}
\end{align*}
Interpreting the above triple sum as the $\phi^{(3)}$-function leads us to the formula \eqref{Th-DiscreteAnalogCor-1}. 
\end{proof}

By letting $i\rightarrow r-i$, $j\rightarrow s-j$, $k\rightarrow t-k$, $\delta_1 \rightarrow q^{1-r}/x$, $\delta_2 \rightarrow q^{1-s}/y$, $\delta_3 \rightarrow q^{1-t}/z$ in \eqref{Th-DiscreteAnalogCor-1}, we have
\begin{align}\label{Th-DiscreteAnalogCor-5}
	&\phi^{(3)}\left[\begin{matrix}
		-:: -; q^{\alpha_2}; q^{\beta_1}: q^{\alpha_1}, q^{-r}; q^{\beta_2}, q^{-s}; q^{-t} \\
		-:: -; -; -: q^{\gamma_1}, q^{1-r}/x; q^{\gamma_2}, q^{1-s}/y; q^{\gamma_3},  q^{1-t}/z
	\end{matrix}; q, q, q, q\right]\notag\\
	&=\sum_{i=0}^r \sum_{j=0}^s \sum_{k=0}^t
	w_1(r-i,r;q)w_2(s-j,s;q)w_3(t-k,t;q)
	\notag\\
	&\hspace{0.5cm}\cdot\phi^{(3)}\left[\begin{matrix}
		-:: -; q^{\alpha_2}; q^{\beta_1}: q^{\lambda_1}, q^{-i}; q^{\lambda_2}, q^{-j}; q^{-k} \\
		-:: -; -; -: q^{\mu_1}, q^{1-r}/x; q^{\mu_2}, q^{1-s}/y; q^{\mu_3},  q^{1-t}/z
	\end{matrix}; q, q, q, q\right].
\end{align}
Note that
\begin{align}
	\lim_{r\rightarrow+\infty}w_1(r-i,r;q)
	&=\frac{(q^{\alpha_1},q^{\mu_1};q)_\infty}{(q^{\gamma_1},q^{\lambda_1};q)_\infty}\frac{(q^{\gamma_1+\lambda_1-\alpha_1-\mu_1};q)_i}{(q;q)_i}q^{i\mu_1}\notag\\
	&\cdot {}_3\phi_1\left[\begin{matrix}
		q^{\lambda_1-\alpha_1}, q^{\gamma_1-\alpha_1}, q^{-i} \\
		q^{\gamma_1+\lambda_1-\alpha_1-\mu_1}
	\end{matrix}; q, q^{\alpha_1-\mu_1+i} \right], \label{Th-DiscreteAnalogCor-2}\\
	\lim_{s\rightarrow+\infty}w_2(s-j,s;q)
	&=\frac{(q^{\beta_2},q^{\mu_2};q)_\infty}{(q^{\gamma_2},q^{\lambda_2};q)_\infty}\frac{(q^{\gamma_2+\lambda_2-\beta_2-\mu_2};q)_j}{(q;q)_j} q^{j\mu_3}\notag\\
	&\cdot {}_3\phi_1\left[\begin{matrix}
		q^{\lambda_2-\beta_2}, q^{\gamma_2-\beta_2}, q^{-j} \\
		q^{\gamma_2+\lambda_2-\beta_2-\mu_2}
	\end{matrix}; q, q^{\beta_2-\mu_2+j}\right]
	\label{Th-DiscreteAnalogCor-3}
\end{align}
and 
\begin{equation}\label{Th-DiscreteAnalogCor-4}
	\lim_{t\rightarrow+\infty}w_3(t-k,t;q):=\frac{(q^{\mu_3};q)_\infty}{(q^{\gamma_3};q)_\infty}\frac{(q^{\gamma_3-\mu_3};q)_k}{(q;q)_k}q^{k\mu_3}.
\end{equation}
Letting $r,s,t\rightarrow+\infty$ in \eqref{Th-DiscreteAnalogCor-5} and making use of \eqref{Th-DiscreteAnalogCor-2}, \eqref{Th-DiscreteAnalogCor-3} and \eqref{Th-DiscreteAnalogCor-4}, we obtain
\begin{align*}
	&\widetilde{\Phi}_K[\alpha_1,\alpha_2,\alpha_2,\beta_1,\beta_2,\beta_1;\gamma_1,\gamma_2,\gamma_3; q, x, y, z]\notag\\
	&=\frac{(q^{\alpha_1},q^{\mu_1},q^{\gamma_1-\lambda_1-\alpha_1-\mu_1};q)_\infty}{(q^{\gamma_1},q^{\lambda_1},q;q)_\infty}
	\frac{(q^{\beta_2},q^{\mu_2},q^{\gamma_2+\lambda_2-\beta_2-\mu_2};q)_\infty}{(q^{\gamma_2},q^{\lambda_2},q;q)_\infty}
	\frac{(q^{\mu_3},q^{\gamma_3-\mu_3};q)_\infty}{(q^{\gamma_3},q;q)_\infty}
	\notag\\
	&\hspace{0.5cm}\cdot\sum_{i,j,k=0}^{\infty}
	\frac{(q^{1+i};q)_\infty}{(q^{\gamma_1+\lambda_1-\alpha_1-\mu_1+i};q)_\infty}
	\frac{(q^{1+j};q)_\infty}{(q^{\gamma_2+\lambda_2-\beta_2-\mu_2+j};q)_\infty} 
	\frac{(q^{1+k};q)_\infty}{(q^{\gamma_3-\mu_3+k};q)_\infty} 
	q^{i\mu_1+j\mu_2+k\mu_3} \notag \\
	&\hspace{0.5cm}\cdot {}_3\phi_1\left[\begin{matrix}
		q^{\lambda_1-\alpha_1}, q^{\gamma_1-\alpha_1}, q^{-i} \\
		q^{\gamma_1+\lambda_1-\alpha_1-\mu_1}
	\end{matrix}; q, q^{\alpha_1-\mu_1+i} \right]
	{}_3\phi_1\left[\begin{matrix}
		q^{\lambda_2-\beta_2}, q^{\gamma_2-\beta_2}, q^{-j} \\
		q^{\gamma_2+\lambda_2-\beta_2-\mu_2}
	\end{matrix}; q, q^{\beta_2-\mu_2+j}\right] \notag\\
	&\hspace{0.5cm}\cdot
	\widetilde{\Phi}_K[\lambda_1,\alpha_2,\alpha_2,\beta_1,\lambda_2,\beta_1;\mu_1,\mu_2,\mu_3; q, xq^i, yq^j, zq^k].
\end{align*}
Using definition \eqref{Def-q-integral} of the multiple $q$-integral, the expression can be further reduced to \eqref{Cor-JoshVyas-generalization-2-1}.

\subsection{A $q$-analogue of Theorem \ref{Th-ErdelyiIntegral-FK}}

Let $f:\mathbb{C}\rightarrow\mathbb{C}$ be a function of one variable. We define the $q$-shift operator $\mathbb{E}_{q}$ by $\mathbb{E}_{q}f(x)=f(qx)$ $(0<q<1)$ \cite[Definition 1.3]{Liu-2023}. Inductively, for any $n\in\mathbb{Z}_{\geq1}$, 
\[
\mathbb{E}_{q}^n f(x)=f(q^n x)
\] 
and in particular, $\mathbb{E}_{q}^0 f(x)=f(x)$. In addition, $\displaystyle \lim_{q\rightarrow1^{-}} \mathbb{E}_{q}^{n}f(x)=f(x)$ . For $k$-dimensional case, we use the notation $\mathbb{E}_{q,x_i}$ to denote the operator acting on the $i$-th variable of $f$, namely, 
\[
\mathbb{E}_{q,x_i} f(x_1,\cdots,x_i,\cdots, x_k)
=f(x_1,\cdots,q x_i,\cdots, x_k), ~ i=1,\cdots, k.
\]
We shall frequently use the formula
\begin{equation}\label{OF-qShift}
\mathbb{E}_{q,x}^n \mathbb{E}_{q,y}^m f(x,y)=f(q^n x, q^m y), ~~~(n,m)\in\mathbb{Z}_{\geq0}^2.
\end{equation}

If $\displaystyle \mathcal{E}(x)=\sum_{n=0}^{\infty}a_n x^n$ $(|x|<R)$ is an analytic functions, then 
\[
\mathcal{E}(t~\mathbb{E}_{q,x})f(x)=\sum_{n=0}^{\infty}a_n t^n f(q^n x), ~~~|t|<R,
\]
where $f(x)$ is also assumed to be bounded (or, in many situations, analytic) in a neighborhood of $0$. We have 
\[
\lim_{q\rightarrow 1^{-}}
\mathcal{E}(t~\mathbb{E}_{q,x})f(x)=\mathcal{E}(t)f(x).
\]
The limit helps overcome the difficulty in deriving the $q$-analogue of Theorem \ref{Th-ErdelyiIntegral-FK}.

\begin{theorem}\label{Th-qErdelyiIntegral-qFK}
	Let $\Re(\alpha_1+\eta_1)>\Re(\lambda_1)>0$, $\Re(\beta_2+\mu_2)>\Re(\lambda_2)>0$ and $\Re(\gamma_3)>\Re(\beta_1)>0$. Then we have
	\begin{align}\label{Th-qErdelyiIntegral-qFK-1}
		&\widetilde{\Phi}_K[\alpha_1, \alpha_2, \alpha_2, \beta_1, \beta_2, \beta_1; \alpha_1+\eta_1, \beta_2+\mu_2, \gamma_3; q, x, y, z]
		=\int_{[0,1]^3}
		\frac{(uxq^{\lambda_3},vyq^{\eta_2};q)_{\infty}}{(ux,vy;q)_{\infty}}\notag\\
		&\hspace{0.5cm}\cdot \phi^{(3)}\Bigg[\begin{matrix}
				- :: -; q^{\eta_2}; q^{\lambda_3}: q^{\lambda_1-\eta_1}, u^{-1}; q^{\lambda_2-\mu_2}, v^{-1};  -\\
				- :: -; -; -: q^{\lambda_1}, q/(ux); q^{\lambda_2}, q/(vy); uxq^{\lambda_3}, vyq^{\eta_2}, q^{\lambda_3}
			\end{matrix};\notag\\
		&\hspace{1.5cm}q, q, q, wzq^{\alpha_2-\eta_2} \mathbb{E}_{q,x} \mathbb{E}_{q,y} \Bigg]\notag\\
		&\hspace{0.5cm}\cdot \widetilde{\Phi}_K\big[\alpha_1, \alpha_2-\eta_2, \alpha_2-\eta_2, \beta_1-\lambda_3, \beta_2, \beta_1-\lambda_3; \alpha_1-\lambda_1+\eta_1, \beta_2-\lambda_2+\mu_2, \beta_1-\lambda_3;\notag\\
		&\hspace{1.5cm}q, uxq^{\lambda_3}, vyq^{\eta_2}, wz\big]
		\mathrm{d}\mu_{\alpha_1-\lambda_1+\eta_1, \lambda_1}(u;q) \mathrm{d}\mu_{\beta_2-\lambda_2+\mu_2, \lambda_2}(v;q) \mathrm{d}\mu_{\beta_1, \gamma_3-\beta_1}(w;q).
	\end{align}
\end{theorem}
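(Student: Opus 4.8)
The plan is to mirror the direct computation of Section \ref{Section-3}, replacing each classical ingredient by its basic ($q$-) analogue and using the $q$-shift operators to reproduce the coupling between the summation index of the third variable and the arguments $x,y$. Denote the right-hand side of \eqref{Th-qErdelyiIntegral-qFK-1} by $I$. First I would expand the first $\widetilde{\Phi}_K$ factor in the integrand by \eqref{Def-qFK-2varphi1} as a single series in an index $m$, producing $(wz)^m$ times a product of two ${}_2\widetilde{\phi}_1$'s with arguments $uxq^{\lambda_3}$ and $vyq^{\eta_2}$. Next I would expand the $\phi^{(3)}$ factor by \eqref{Def-q-TripleSeries}; the crucial point is that its third ``variable'' is the operator $wzq^{\alpha_2-\eta_2}\mathbb{E}_{q,x}\mathbb{E}_{q,y}$, so that the $k$-th term of the third summation carries the scalar $(wzq^{\alpha_2-\eta_2})^k$ and the operator $(\mathbb{E}_{q,x}\mathbb{E}_{q,y})^k$, which by \eqref{OF-qShift} shifts the arguments of the first $\widetilde{\Phi}_K$ to $uxq^{\lambda_3+k}$ and $vyq^{\eta_2+k}$. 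This shift is precisely the $q$-analogue of the factor $[(1-ux)(1-vy)]^{-n}$ arising from \eqref{Th-ErdelyiIntegral-FK-Proof-2} in the classical proof.

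Then I would carry out the two inner summations of $\phi^{(3)}$ (over the indices attached to its first and second variables) for fixed $k$. Using $(q^{\lambda_3};q)_{k+i}=(q^{\lambda_3};q)_k(q^{\lambda_3+k};q)_i$ and the analogous identity in the second index, these summations collapse into exactly the two ${}_3\phi_2$'s appearing in the $q$-Erd\'{e}lyi integral \eqref{eq:2phi1 Erdelyi1}, while the surviving factors $(uxq^{\lambda_3};q)_k^{-1}$ and $(vyq^{\eta_2};q)_k^{-1}$ from the denominator of $\phi^{(3)}$ combine with the prefactor $(uxq^{\lambda_3},vyq^{\eta_2};q)_\infty/(ux,vy;q)_\infty$ to give the shifted prefactors $(uxq^{\lambda_3+k};q)_\infty/(ux;q)_\infty$ and $(vyq^{\eta_2+k};q)_\infty/(vy;q)_\infty$. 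After interchanging the summations with the $q$-integrations (justified by absolute and uniform convergence on the Reinhardt-type region, as in Section \ref{Section-3}), the $u$- and $v$-integrands are now in the exact form of the right-hand side of \eqref{eq:2phi1 Erdelyi1}; applying it with $\alpha'\to\lambda_3+k$, $\beta\to\alpha_1$, $\lambda\to\alpha_1-\lambda_1+\eta_1$, $\alpha\to\beta_1+m+k$, $\gamma\to\alpha_1+\eta_1$ (and the corresponding choice in $v$) evaluates them to single ${}_2\widetilde{\phi}_1$'s in $x$ and $y$ of index $m+k$, the $q$-analogues of \eqref{Th-ErdelyiIntegral-FK-Proof-5} and \eqref{Th-ErdelyiIntegral-FK-Proof-6}. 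The $w$-integral is handled by the moment formula $\int_0^1 w^{m+k}\,\mathrm{d}\mu_{\beta_1,\gamma_3-\beta_1}(w;q)=(q^{\beta_1};q)_{m+k}/(q^{\gamma_3};q)_{m+k}$, the $q$-version of \eqref{Th-ErdelyiIntegral-FK-Proof-4}.

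At this stage $I$ is a double sum over $m$ and $k$; setting $N=m+k$ and applying the $q$-Chu--Vandermonde identity $\sum_{k=0}^{N}\frac{(q^{\eta_2};q)_k(q^{\alpha_2-\eta_2};q)_{N-k}}{(q;q)_k(q;q)_{N-k}}q^{(\alpha_2-\eta_2)k}=\frac{(q^{\alpha_2};q)_N}{(q;q)_N}$ collapses the inner sum, whereupon \eqref{Def-qFK-2varphi1} identifies the result with the left-hand side of \eqref{Th-qErdelyiIntegral-qFK-1}. I expect the main obstacle to be the rigorous handling of the operator-valued argument of $\phi^{(3)}$: one must justify that $\phi^{(3)}[\dots;\,wzq^{\alpha_2-\eta_2}\mathbb{E}_{q,x}\mathbb{E}_{q,y}]$ may be expanded term-by-term, that the resulting shifts commute with the $q$-integrations, and that the parameters align so that the ${}_3\phi_2$ blocks of $\phi^{(3)}$ match those generated by \eqref{eq:2phi1 Erdelyi1} and that the denominator of $\phi^{(3)}$ supplies exactly the prefactor shift. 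The internal consistency of the construction, and in particular the limit $q\to1^{-}$ recovering Theorem \ref{Th-ErdelyiIntegral-FK}, provides a useful check on the bookkeeping.
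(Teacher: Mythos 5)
Your proposal is correct and follows essentially the same route as the paper's own proof: expand the $\phi^{(3)}$ factor (reading its blocks as the two ${}_3\phi_2$'s of Gasper's $q$-Erd\'{e}lyi integral \eqref{eq:2phi1 Erdelyi1}), let the operator argument $wzq^{\alpha_2-\eta_2}\mathbb{E}_{q,x}\mathbb{E}_{q,y}$ shift the arguments of the second $\widetilde{\Phi}_K$ via \eqref{OF-qShift}, evaluate the $u$- and $v$-integrals by \eqref{eq:2phi1 Erdelyi1} and the $w$-integral by the $q$-moment formula, and finish with $q$-Chu--Vandermonde. The only differences are cosmetic (order of expansion of the two factors and index names), so the two arguments coincide step for step.
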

\begin{proof}
We use the method from Section \ref{Section-3}. Let $I$ denote the triple $q$-integral in \eqref{Th-qErdelyiIntegral-qFK-1}. By the definition \eqref{Def-q-TripleSeries} of the $\phi^{(3)}$-function, we have 
\begin{align*}
	&\phi^{(3)}\left[\begin{matrix}
			- :: -; q^{\eta_2}; q^{\lambda_3}: q^{\lambda_1-\eta_1}, u^{-1}; q^{\lambda_2-\mu_2}, v^{-1};  -\\
			- :: -; -; -: q^{\lambda_1}, q/(ux); q^{\lambda_2}, q/(vy); uxq^{\lambda_3}, vyq^{\eta_2}, q^{\lambda_3}
	\end{matrix}; q, q, q, wzq^{\alpha_2-\eta_2} \mathbb{E}_{q,x} \mathbb{E}_{q,y} \right]\\
	&\hspace{0.5cm}=\sum_{k=0}^{\infty} \frac{(q^{\eta_2};q)_k}{(uxq^{\lambda_3}, vyq^{\eta_2};q)_k} {}_3\phi_2 \left[ \begin{matrix}
			q^{\lambda_3+k}, q^{\lambda_1-\eta_1}, u^{-1} \\
			q^{\lambda_1}, q/(ux)
		\end{matrix}; q , q \right]\\
	&\hspace{1.5cm}\cdot {}_3\phi_2 \left[\begin{matrix}
			q^{\eta_2+k}, q^{\lambda_2-\mu_2}, v^{-1} \\
			q^{\lambda_2}, q/(vy)
		\end{matrix}; q, q \right] \frac{(wzq^{\alpha_2-\eta_2} \mathbb{E}_{q,x} \mathbb{E}_{q,y})^k}{(q;q)_k}.
\end{align*}
Then, in view of \eqref{OF-qShift} and \eqref{Def-qFK-2varphi1}, we have
\begin{align}
	I&=\int_{[0,1]^3}
	\frac{(uxq^{\lambda_3},vyq^{\eta_2};q)_{\infty}}{(ux,vy;q)_{\infty}} \sum_{k=0}^{\infty} \frac{(q^{\eta_2};q)_k}{(uxq^{\lambda_3}, vyq^{\eta_2};q)_k} \notag\\
	&\hspace{0.5cm}\cdot
	{}_3\phi_2 \left[ \begin{matrix}
		q^{\lambda_3+k}, q^{\lambda_1-\eta_1}, u^{-1} \\
		q^{\lambda_1}, q/(ux)
	\end{matrix}; q , q \right] 
	{}_3\phi_2 \left[\begin{matrix}
		q^{\eta_2+k}, q^{\lambda_2-\mu_2}, v^{-1} \\
		q^{\lambda_2}, q/(vy)
	\end{matrix}; q, q\right] \frac{(wzq^{\alpha_2-\eta_2})^k}{(q;q)_k} \notag\\
	&\hspace{0.5cm}\cdot \widetilde{\Phi}_K\big[\alpha_1, \alpha_2-\eta_2, \alpha_2-\eta_2, \beta_1-\lambda_3, \beta_2, \beta_1-\lambda_3; \alpha_1-\lambda_1+\eta_1, \beta_2-\lambda_2+\mu_2, \beta_1-\lambda_3;\notag\\
	&\hspace{1cm} uxq^{k+\lambda_3}, vyq^{k+\eta_2}, wz\big]\mathrm{d}\mu_{\alpha_1-\lambda_1+\eta_1, \lambda_1}(u;q)\mathrm{d}\mu_{\beta_2-\lambda_2+\mu_2, \lambda_2}(v;q)\mathrm{d}\mu_{\beta_1, \gamma_3-\beta_1}(w;q)\notag\\
	&=\int_{[0,1]^3}
	 \sum_{k=0}^{\infty} \frac{(q^{\eta_2};q)_k}{(q;q)_k}\frac{(uxq^{k+\lambda_3},vyq^{k+\eta_2};q)_{\infty}}{(ux,vy;q)_{\infty}}(wzq^{\alpha_2-\eta_2})^k \notag\\
	&\hspace{0.5cm}\cdot
	{}_3\phi_2 \left[ \begin{matrix}
		q^{\lambda_3+k}, q^{\lambda_1-\eta_1}, u^{-1} \\
		q^{\lambda_1}, q/(ux)
	\end{matrix}; q,q \right] 
	{}_3\phi_2 \left[\begin{matrix}
		q^{\eta_2+k}, q^{\lambda_2-\mu_2}, v^{-1} \\
		q^{\lambda_2}, q/(vy)
	\end{matrix}; q,q\right] \notag\\
	&\hspace{0.5cm}\cdot \sum_{\ell=0}^{\infty} \frac{(q^{\alpha_2-\eta_2};q)_\ell}{(q;q)_\ell} {}_2\widetilde{\phi}_1 \left[\begin{matrix}
		\beta_1-\lambda_3+\ell, \alpha_1\\
		\alpha_1-\lambda_1+\eta_1
	\end{matrix}; q, uxq^{k+\lambda_3}
	\right] {}_2\widetilde{\phi}_1 \left[\begin{matrix}
		\alpha_2-\eta_2+\ell, \beta_2 \\
		\beta_2-\lambda_2+\mu_2
	\end{matrix}; q, vyq^{k+\eta_2}
	\right]\notag\\
	&\hspace{0.5cm}\cdot (wz)^{\ell}\mathrm{d}\mu_{\alpha_1-\lambda_1+\eta_1, \lambda_1}(u;q) \mathrm{d}\mu_{\beta_2-\lambda_2+\mu_2, \lambda_2}(v;q) \mathrm{d}\mu_{\beta_1, \gamma_3-\beta_1}(w;q)\notag\\
	&=\sum_{k=0}^{\infty} \frac{(q^{\eta_2};q)_k}{(q;q)_k} (zq^{\alpha_2-\eta_2})^k \sum_{\ell=0}^{\infty} \frac{(q^{\alpha_2-\eta_2};q)_\ell}{(q;q)_\ell} z^\ell \int_0^1 w^{k+\ell} \mathrm{d}\mu_{\beta_1,\gamma_3-\beta_1}(w;q) \notag\\
	&\hspace{0.5cm}\cdot\int_0^1 \frac{(uxq^{\lambda_3+k};q)_{\infty}}{(ux;q)_{\infty}} {}_2\widetilde{\phi}_1 \left[ \begin{matrix}
		\beta_1-\lambda_3+\ell, \alpha_1 \\
		\alpha_1-\lambda_1+\eta_1
	\end{matrix}; q, uxq^{\lambda_3+k}
	\right] \notag\\
	&\hspace{2cm}\cdot{}_3\phi_2 \left[ \begin{matrix}
		q^{\lambda_3+k}, q^{\lambda_1-\eta_1}, u^{-1} \\
		q^{\lambda_1}, q/(ux)
	\end{matrix}; q , q \right]\mathrm{d} \mu_{\alpha_1-\lambda_1+\eta_1,\lambda_1}(u;q) \notag\\
	&\hspace{0.5cm}\cdot\int_0^1 \frac{(vyq^{\eta_2+k};q)_{\infty}}{(vy;q)_{\infty}} 
	{}_2\widetilde{\phi}_1 \left[ \begin{matrix}
		\alpha_2-\eta_2+\ell, \beta_2 \\
		\beta_2-\lambda_2+\mu_2
	\end{matrix}; q, vyq^{\eta_2+k}
	\right]\notag\\
	&\hspace{2cm}\cdot{}_3\phi_2 \left[ \begin{matrix}
		q^{\eta_2+k}, q^{\lambda_2-\mu_2}, v^{-1} \\
		q^{\lambda_2}, q/(vy)
	\end{matrix}; q, q \right] \mathrm{d} \mu_{\beta_2-\lambda_2+\mu_2,\lambda_2}(v;q).
	\label{Th-qErdelyiIntegral-qFK-Proof-1}
\end{align}

By letting $\alpha\rightarrow\beta_1+k+\ell$, $\beta\rightarrow\alpha_1$, $\gamma\rightarrow\alpha_1+\eta_1$, $\lambda\rightarrow\alpha_1-\lambda_1+\eta_1$ and $\alpha'\rightarrow\lambda_3+k$ in \eqref{eq:2phi1 Erdelyi1}, we have
\begin{align}\label{Th-qErdelyiIntegral-qFK-Proof-2}
	&\int_0^1 \frac{(uxq^{\lambda_3+k};q)_{\infty}}{(ux;q)_{\infty}} {}_2\widetilde{\phi}_1 \left[\begin{matrix}
			\beta_1-\lambda_3+\ell, \alpha_1 \\
			\alpha_1-\lambda_1+\eta_1
		\end{matrix}; q, uxq^{\lambda_3+k}
		\right] \notag\\
	&\hspace{0.5cm}\cdot{}_3\phi_2 \left[ \begin{matrix}
			q^{\lambda_3+k}, q^{\lambda_1-\eta_1}, u^{-1} \\
			q^{\lambda_1}, q/(ux)
		\end{matrix}; q, q \right]
		\mathrm{d}\mu_{\alpha_1-\lambda_1+\eta_1,\lambda_1}(u;q)
		={}_2\widetilde{\phi}_1\left[\begin{matrix}
			\beta_1+k+\ell, \alpha_1\\
			\alpha_1+\eta_1
		\end{matrix}; q, x\right],
\end{align}
where $\Re(\alpha_1+\eta_1)>\Re(\lambda_1)>0$. Similarly, let $\alpha\rightarrow\alpha_2+k+\ell$, $\beta\rightarrow\beta_2$, $\gamma\rightarrow\beta_2+\mu_2$, $\lambda=\beta_2-\lambda_2+\mu_2$ and $\alpha'\rightarrow\eta_2+k$ in \eqref{eq:2phi1 Erdelyi1}, then
\begin{align}\label{Th-qErdelyiIntegral-qFK-Proof-3}
	&\int_0^1 \frac{(vyq^{\eta_2+k};q)_{\infty}}{(vy;q)_{\infty}} {}_2\widetilde{\phi}_1\left[\begin{matrix}
			\alpha_2-\eta_2+\ell, \beta_2 \\
			\beta_2-\lambda_2+\mu_2
		\end{matrix}; q, vyq^{\eta_2+k}
		\right]\notag\\
	&\hspace{0.5cm}\cdot{}_3\phi_2 \left[ \begin{matrix}
			q^{\eta_2+k}, q^{\lambda_2-\mu_2}, v^{-1} \\
			q^{\lambda_2}, q/(vy)
		\end{matrix}; q, q\right]\mathrm{d} \mu_{\beta_2-\lambda_2+\mu_2,\lambda_2}(v;q)
	={}_2\widetilde{\phi}_1 \left[\begin{matrix}
			\alpha_2+k+\ell, \beta_2 \\
			\beta_2+\mu_2
		\end{matrix}; q, y\right],
\end{align}
where $\Re(\beta_2+\mu_2)>\Re(\lambda_2)>0$. Substituting \eqref{Th-qErdelyiIntegral-qFK-Proof-2} and \eqref{Th-qErdelyiIntegral-qFK-Proof-3} in \eqref{Th-qErdelyiIntegral-qFK-Proof-1} gives
\begin{align}\label{Th-qErdelyiIntegral-qFK-Proof-4}
	I&=\sum_{k=0}^{\infty} \frac{(q^{\eta_2};q)_k}{(q;q)_k} (q^{\alpha_2-\eta_2})^{k} 
	\sum_{\ell=0}^{\infty} \frac{(q^{\alpha_2-\eta_2};q)_\ell}{(q;q)_\ell} \frac{(q^{\beta_1};q)_{k+\ell}}{(q^{\gamma_3};q)_{k+\ell}} z^{k+\ell}\notag\\
	&\hspace{1cm}\cdot {}_2\widetilde{\phi}_1 \left[\begin{matrix}
		\beta_1+k+\ell, \alpha_1 \\
		\alpha_1+\eta_1
	\end{matrix}; q, x\right]
	{}_2\widetilde{\phi}_1 \left[\begin{matrix}
			\alpha_2+k+\ell, \beta_2 \\
			\beta_2+\mu_2
		\end{matrix}; q, y\right]\notag\\
	&=\sum_{k=0}^{\infty} \frac{(q^{\beta_1};q)_{k}}{(q^{\gamma_3};q)_{k}} z^{k} {}_2\widetilde{\phi}_1 \left[\begin{matrix}
		\beta_1+k, \alpha_1 \\
		\alpha_1+\eta_1
	\end{matrix}; q, x \right] {}_2\widetilde{\phi}_1 \left[\begin{matrix}
		\alpha_2+k, \beta_2 \\
		\beta_2+\mu_2
	\end{matrix}; q,y\right]\notag\\
	&\hspace{1cm}\cdot\sum_{\ell=0}^{k} \frac{(q^{\eta_2};q)_{k-\ell}}{(q;q)_{k-\ell}}  \frac{(q^{\alpha_2-\eta_2};q)_\ell}{(q;q)_\ell} (q^{\alpha_2-\eta_2})^{k-\ell}\notag\\
	&=\sum_{k=0}^{\infty} \frac{(q^{\alpha_2},q^{\beta_1};q)_{k}}{(q^{\gamma_3},q;q)_{k}} z^{k} {}_2\widetilde{\phi}_1 \left[ \begin{matrix}
		\beta_1+k, \alpha_1 \\
		\alpha_1+\eta_1
	\end{matrix}; q, x \right] 
	{}_2\widetilde{\phi}_1 \left[\begin{matrix}
		\alpha_2+k, \beta_2\\
		\beta_2+\mu_2
	\end{matrix}; q, y \right].
\end{align}
By interpreting the last series in \eqref{Th-qErdelyiIntegral-qFK-Proof-4} as the $q$-Saran function $\widetilde{\Phi}_K$, we obtain the desired result \eqref{Th-qErdelyiIntegral-qFK-1}. 
\end{proof}
\begin{remark}
The formula \eqref{Th-qErdelyiIntegral-qFK-1} obtained here involves the $q$-shift operators $\mathbb{E}_{q,x}$ and $\mathbb{E}_{q,y}$, which makes it appears slightly different from the familiar integrals associated with ${}_{2}\phi_{1}$. It would therefore be interesting to explore whether alternative $q$-Erd\'{e}lyi-type integrals for $\Phi_K$ can be derived through a different approach. We conjecture that the use of \emph{$q$-fractional integration by parts} \cite[p. 159, Theorem 5.14]{Annaby-Mansour-2012} may offer a viable path for addressing this problem. 
\end{remark}

Finally, we show that in some cases, the integrand in \eqref{Th-qErdelyiIntegral-qFK} can be simplified so that the operators $\mathbb{E}_{q,x}$ and $\mathbb{E}_{q,y}$ do not appear any more.
\begin{corollary}\label{Cor-qErdelyiIntegral-qFK}
	When $\min\{\Re(\alpha_1),\Re(\eta_1),\Re(\beta_2),\Re(\mu_2)\}>0$ and $\Re(\gamma_3)>\Re(\beta_1)>0$, we have 
\begin{align}\label{Cor-qErdelyiIntegral-qFK-1}
	&\widetilde{\Phi}_K [\alpha_1,\alpha_2,\alpha_2,\beta_1,\beta_2,\beta_1;\alpha_1+\eta_1,\beta_2+\mu_2,\gamma_3;q,x,y,z]=\int_{[0,1]^3} \frac{(uxq^{\beta_1},vyq^{\alpha_2};q)_{\infty}}{(ux,vy;q)_{\infty}} \notag\\
	&\hspace{1cm}\cdot {}_3\phi_2\left[\begin{matrix}
				q^{\alpha_2},0,0 \\
				uxq^{\beta_1},vyq^{\alpha_2}
	\end{matrix};q,wz\right]\mathrm{d}\mu_{\alpha_1,\eta_1}(u;q) \mathrm{d}\mu_{\beta_2,\mu_2}(v;q) \mathrm{d}\mu_{\beta_1,\gamma_3-\beta_1}(w;q) .
	\end{align}
\end{corollary}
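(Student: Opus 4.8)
The plan is to obtain Corollary \ref{Cor-qErdelyiIntegral-qFK} as the special case of Theorem \ref{Th-qErdelyiIntegral-qFK} corresponding to the choice
\[
\lambda_1=\eta_1,\quad \lambda_2=\mu_2,\quad \lambda_3=\beta_1,\quad \eta_2=\alpha_2.
\]
First I would verify that under these choices the hypotheses of Theorem \ref{Th-qErdelyiIntegral-qFK} collapse exactly to those of the corollary: $\Re(\alpha_1+\eta_1)>\Re(\lambda_1)>0$ becomes $\min\{\Re(\alpha_1),\Re(\eta_1)\}>0$, $\Re(\beta_2+\mu_2)>\Re(\lambda_2)>0$ becomes $\min\{\Re(\beta_2),\Re(\mu_2)\}>0$, and $\Re(\gamma_3)>\Re(\beta_1)>0$ is unchanged. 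The three Dirichlet measures then reduce to $\mathrm{d}\mu_{\alpha_1,\eta_1}(u;q)$, $\mathrm{d}\mu_{\beta_2,\mu_2}(v;q)$, $\mathrm{d}\mu_{\beta_1,\gamma_3-\beta_1}(w;q)$, and the prefactor $(uxq^{\lambda_3},vyq^{\eta_2};q)_\infty/(ux,vy;q)_\infty$ becomes $(uxq^{\beta_1},vyq^{\alpha_2};q)_\infty/(ux,vy;q)_\infty$, already matching \eqref{Cor-qErdelyiIntegral-qFK-1}.

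The heart of the computation is to show that the inner $\widetilde{\Phi}_K$ in the integrand of \eqref{Th-qErdelyiIntegral-qFK-1} degenerates to the constant $1$. Here I would \emph{not} substitute into the defining series \eqref{Def-qFK}, because the slot $\gamma_3\mapsto\beta_1-\lambda_3$ produces the indeterminate parameter $q^{0}=1$ in the denominator; instead I would use the representation \eqref{Def-qFK-2varphi1}. In that representation the $\beta_1$- and $\gamma_3$-slots of the inner $\widetilde{\Phi}_K$ are both equal to $\beta_1-\lambda_3$ for \emph{every} value of $\lambda_3$, so the factor $(q^{\beta_1-\lambda_3};q)_p/(q^{\beta_1-\lambda_3};q)_p$ cancels identically and the $p$-summand carries the clean coefficient $(q^{\alpha_2-\eta_2};q)_p/(q;q)_p$. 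Setting $\eta_2=\alpha_2$ turns this into $(1;q)_p/(q;q)_p$, which vanishes for $p\ge1$ and leaves only $p=0$; the two surviving ${}_2\widetilde{\phi}_1$'s then have a unit upper parameter (via $\lambda_3=\beta_1$, $\lambda_1=\eta_1$ and $\lambda_2=\mu_2$) and hence equal $1$ by the $q$-binomial theorem. Thus the inner $\widetilde{\Phi}_K$ equals $1$ identically as a function of its arguments.

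Because this $\widetilde{\Phi}_K$ is the constant function $1$, and because (as recorded in the proof of Theorem \ref{Th-qErdelyiIntegral-qFK} via \eqref{OF-qShift}) the shift operators $\mathbb{E}_{q,x}$, $\mathbb{E}_{q,y}$ act only on the arguments $uxq^{\lambda_3}$, $vyq^{\eta_2}$ of that $\widetilde{\Phi}_K$, the operators now act trivially. It remains to evaluate the scalar $\phi^{(3)}$ factor on the constant $1$. Expanding $\phi^{(3)}$ exactly as in the proof of Theorem \ref{Th-qErdelyiIntegral-qFK}, each of the two inner ${}_3\phi_2$'s has a unit upper entry $q^{\lambda_1-\eta_1}=q^{\lambda_2-\mu_2}=1$ and collapses to $1$, while $q^{\alpha_2-\eta_2}=1$; the surviving series is $\sum_{k\ge0}\frac{(q^{\alpha_2};q)_k}{(uxq^{\beta_1},vyq^{\alpha_2},q;q)_k}(wz)^k$, which is precisely ${}_3\phi_2\!\left[q^{\alpha_2},0,0;uxq^{\beta_1},vyq^{\alpha_2};q,wz\right]$ since $(0;q)_k=1$ and the balancing exponent $1+s-r$ vanishes for a ${}_3\phi_2$. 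Combining this with the prefactor and the three measures yields \eqref{Cor-qErdelyiIntegral-qFK-1}.

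The main obstacle I anticipate is the bookkeeping around the degenerate parameters $q^{0}=1$: one must ensure that the collapse of the inner $\widetilde{\Phi}_K$ is genuine and not a $0/0$ artefact, and that the shift operators really do see only the constant $1$. The safe route, should direct substitution feel delicate, is to treat $\lambda_3=\beta_1$ and $\eta_2=\alpha_2$ as limits $\lambda_3\to\beta_1$, $\eta_2\to\alpha_2$ and invoke continuity of both sides in these parameters; the representation \eqref{Def-qFK-2varphi1}, in which the $\beta_1$- and $\gamma_3$-slots cancel \emph{before} any specialization, is exactly what renders this limit harmless.
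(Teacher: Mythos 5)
Your proposal is correct, but it takes a genuinely different route from the paper's. Both proofs specialize Theorem \ref{Th-qErdelyiIntegral-qFK} with $\lambda_1=\eta_1$ and $\lambda_2=\mu_2$; the difference lies in what happens to $\lambda_3$ and $\eta_2$. The paper leaves them \emph{arbitrary}: it packages the operator-valued ${}_3\phi_2$, the ratio of infinite products, and the inner $\widetilde{\Phi}_K$ into a kernel $\mathcal{K}(q;x,y,z;u,v,w)$, reduces the inner $\widetilde{\Phi}_K$ via the $q$-binomial theorem to
\[
\frac{(uxq^{\beta_1},vyq^{\alpha_2};q)_{\infty}}{(uxq^{\lambda_3},vyq^{\eta_2};q)_{\infty}}\,
{}_3\phi_2\left[\begin{matrix} q^{\alpha_2-\eta_2},0,0\\ uxq^{\beta_1},vyq^{\alpha_2}\end{matrix};q,wz\right],
\]
and then needs the $q$-Chu--Vandermonde convolution
$\sum_{p=0}^{k}\left[\begin{matrix}k\\ p\end{matrix}\right]_q (q^{\eta_2};q)_{k-p}(q^{\alpha_2-\eta_2};q)_{p}\,q^{(k-p)(\alpha_2-\eta_2)}=(q^{\alpha_2};q)_k$
to merge the two ${}_3\phi_2$'s into the single ${}_3\phi_2\big[q^{\alpha_2},0,0;uxq^{\beta_1},vyq^{\alpha_2};q,wz\big]$ of the corollary, after which all $\lambda_3$- and $\eta_2$-dependence cancels. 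You instead make the two extra specializations $\lambda_3=\beta_1$, $\eta_2=\alpha_2$, which trivialize the inner $\widetilde{\Phi}_K$ to the constant $1$ (when read through \eqref{Def-qFK-2varphi1}), render the shift operators $\mathbb{E}_{q,x}$, $\mathbb{E}_{q,y}$ inert, and collapse the $\phi^{(3)}$ directly to the desired ${}_3\phi_2$ --- no summation identity is needed at all. Your caution about the degenerate slots $q^{0}=1$ is exactly the right point to worry about, and your resolution (use \eqref{Def-qFK-2varphi1}, in which the $(q^{\beta_1-\lambda_3};q)_p$ factors cancel identically before specialization, or else pass to the limit $\lambda_3\to\beta_1$, $\eta_2\to\alpha_2$) is sound and consistent with how the paper's own proof of Theorem \ref{Th-qErdelyiIntegral-qFK} expands that inner function. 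In terms of trade-offs: your argument is shorter and purely formal once the degeneracies are handled, while the paper's argument, by keeping $\lambda_3,\eta_2$ free and generic, both avoids any $0/0$ bookkeeping and establishes the stronger structural fact that the specialized integrand is independent of these two parameters.
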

\begin{proof}
	By letting $\lambda_1\rightarrow \eta_1$ and $\lambda_2\rightarrow \mu_2$ in \eqref{Th-qErdelyiIntegral-qFK-1}, we obtain
\begin{align}\label{Cor-qErdelyiIntegral-qFK-Proof-1}
	&\widetilde{\Phi}_K[\alpha_1, \alpha_2, \alpha_2, \beta_1, \beta_2, \beta_1; \alpha_1+\eta_1, \beta_2+\mu_2, \gamma_3; q, x, y, z]
	=\int_{[0,1]^3}
	\frac{(uxq^{\lambda_3},vyq^{\eta_2};q)_{\infty}}{(ux,vy;q)_{\infty}}\notag\\
	&\hspace{0.5cm}\cdot \mathcal{K}(q;x,y,z;u,v,w)\mathrm{d}\mu_{\alpha_1, \eta_1}(u;q) \mathrm{d}\mu_{\beta_2, \mu_2}(v;q) \mathrm{d}\mu_{\beta_1, \gamma_3-\beta_1}(w;q),
\end{align}
where
\begin{align*}
\mathcal{K}(q;x,y,z;u,v,w)&:=
{}_{3}\phi_{2}\left[\begin{matrix}
	q^{\eta_2}, 0, 0\\
	uxq^{\lambda_3},vyq^{\eta_2}
\end{matrix};q,wzq^{\alpha_2-\eta_2}\mathbb{E}_{q,x}\mathbb{E}_{q,y}\right]\notag\\
&\hspace{1cm}\frac{(uxq^{\beta_1},vyq^{\alpha_2};q)_{\infty}}{(uxq^{\lambda_3},vyq^{\eta_2};q)_{\infty}} 
{}_{3}\phi_{2}\left[\begin{matrix}
	q^{\alpha_2-\eta_2}, 0, 0\\
	uxq^{\beta_1},vyq^{\alpha_2}
\end{matrix};q,wz\right].
\end{align*}
To evaluate $\mathcal{K}(q;x,y,z;u,v,w)$, we first note that
\begin{align*}
\mathcal{K}(q;x,y,z;u,v,w)
&=\sum_{k=0}^{\infty} \frac{(q^{\eta_2};q)_k(wzq^{\alpha_2-\eta_2})^k}{(uxq^{\lambda_3},vyq^{\eta_2},q;q)_k}  \frac{(uxq^{\beta_1+k},vyq^{\alpha_2+k};q)_{\infty}}{(uxq^{\lambda_3+k},vyq^{\eta_2+k};q)_{\infty}} \\
&\hspace{0.5cm}\cdot \sum_{p=0}^{\infty} \frac{(q^{\alpha_2-\eta_2};q)_p (wz)^p	}{(uxq^{\beta_1+k},vyq^{\alpha_2+k},q;q)_p}\\
&=\frac{(uxq^{\beta_1},vyq^{\alpha_2};q)_{\infty}}{(uxq^{\lambda_3},vyq^{\eta_2};q)_{\infty}} \sum_{k=0}^{\infty}\sum_{p=0}^{\infty} 
\frac{(q^{\eta_2};q)_k(q^{\alpha_2-\eta_2};q)_pq^{k(\alpha_2-\eta_2)}
(wz)^{k+p}}{(uxq^{\beta_1},vyq^{\alpha_2};q)_{k+p}(q;q)_k(q;q)_p}\\
&=\frac{(uxq^{\beta_1},vyq^{\alpha_2};q)_{\infty}}{(uxq^{\lambda_3},vyq^{\eta_2};q)_{\infty}} \sum_{k=0}^{\infty}
\frac{(wz)^k}{(uxq^{\beta_1},vyq^{\alpha_2},q;q)_k}\notag\\
&\hspace{0.5cm}\cdot\sum_{p=0}^{k}\left[\begin{matrix}
	k\\
	p
\end{matrix}\right]_q
(q^{\eta_2};q)_{k-p}(q^{\alpha_2-\eta_2};q)_p
q^{(k-p)(\alpha_2-\eta_2)},
\end{align*}
where $\displaystyle \left[\begin{matrix}
k\\
p
\end{matrix}\right]_q$ denotes the $q$-binomial coefficient \cite[p. 24]{Gasper-Rahman-2004}. Then the use of the $q$-binomial theorem \cite[p. 25]{Gasper-Rahman-2004} gives 
\begin{align}\label{Cor-qErdelyiIntegral-qFK-Proof-2}
\mathcal{K}(q;x,y,z;u,v,w)
&=\frac{(uxq^{\beta_1},vyq^{\alpha_2};q)_{\infty}}{(uxq^{\lambda_3},vyq^{\eta_2};q)_{\infty}} \sum_{k=0}^{\infty}
\frac{(q^{\alpha_2};q)_k(wz)^k}{(uxq^{\beta_1},vyq^{\alpha_2},q;q)_k}\notag\\
&=\frac{(uxq^{\beta_1},vyq^{\alpha_2};q)_{\infty}}{(uxq^{\lambda_3},vyq^{\eta_2};q)_{\infty}}
{}_3\phi_2\left[\begin{matrix}
	q^{\alpha_2},0,0 \\
	uxq^{\beta_1},vyq^{\alpha_2}
\end{matrix};q,wz\right].
\end{align}
Substituting \eqref{Cor-qErdelyiIntegral-qFK-Proof-2} into \eqref{Cor-qErdelyiIntegral-qFK-Proof-1} we obtain \eqref{Cor-qErdelyiIntegral-qFK-1}. 
\end{proof}

\section{Conclusion}

In this paper, we continue the recent work of Luo, Xu, and Raina \cite{Luo-Xu-Raina-2022} on Erd\'{e}lyi-type integrals for Saran's function $F_K$, providing a new proof of one of their main results (Theorem \ref{Th-ErdelyiIntegral-FK}). Building on this, we obtain an important generalization, which is applicable to the $L$-variable $F_K$ function \eqref{GeneralizedSaranFK} that arises in physics. Furthermore, some interesting $q$-analogues and discrete analogues are also discussed. 

By reviewing the literature on Erd\'{e}lyi-type integrals of various hypergeometric functions over the past century, this paper compiles the table in Appendix \ref{Appendix-1}, which reflects the overall landscape of research in this direction. As already clarified in Section \ref{Introduction}, whether out of computational interest or due to their usefulness, it is highly valuable to continue exploring Erd\'{e}lyi-type integrals for other  hypergeometric functions not yet included in this table.

\newpage
\section{Appendix}\label{Appendix-1}

\begin{center}
	\begin{tabular}{clll}
		\toprule
		\textbf{Type of function}  &  &  & \textbf{Reference} \\
		\midrule
		$1$ variable  & ${}_{2}F_{1}$ &  & Erd\'{e}lyi (1939) \cite{Erdelyi-1939a}, \cite{Erdelyi-1939b}\\
		  &  &  & Joshi-Vyas (2003) \cite{Joshi-Vyas-2003} \\
		&  ${}_{p}F_{q}$ &  & Joshi-Vyas (2003) \cite{Joshi-Vyas-2003}\\
		  &  &  & Luo-Raina (2017) \cite{Luo-Raina-2017}\\
		& $q$-analogues   &  & Gasper (2000) \cite{Gasper-2000}\\
		  &  &  & Joshi-Vyas (2006) \cite{Joshi-Vyas-2006}\\
		  &  &  & Vyas (2024) \cite{Vyas-2024}\\
		  & discrete analogues &  & Vyas-Bhatnagar-Fatawat-Suthar-Purohit (2022) \cite{Vyas-Bhatnagar-Fatawat-Suthar-Purohit-2022}\\
		  &  &  & Bhatnagar-Vyas (2022) \cite{Bhatnagar-Vyas-2022}\\
		  & others &  & Laine (1982) \cite{Laine-1982}\\
		  &  &  & Virchenko-Rumiantseva (2008) \cite{Virchenko-Rumiantseva-2008}\\
		  &  &  & Virchenko-Ovcharenko (2011) \cite{Virchenko-Ovcharenko-2011}\\
		\midrule 
		$n$ variables & Appell & $F_1$ & Manocha (1967) \cite{Manocha-1967}\\
		($n\geq 2$)  &  &       & Mittal (1977) \cite{Mittal-1977}\\
		  &  & $F_2$ & Koschmieder (1947) \cite{Koschmieder-1947}\\
		  &  &       & Manocha (1967) \cite{Manocha-1967} \\
		  &  &       & Mittal (1977) \cite{Mittal-1977}\\
		  &  & $F_3$ & Mittal (1977) \cite{Mittal-1977}\\
		  &  & $F_4$ & Manocha (1965) \cite{Manocha-1965} \\
		  &  &       & Mittal (1977) \cite{Mittal-1977}\\
		& Lauricella & $F_A$ & Manocha-Sharma (1969) \cite{Manocha-1969}\\
		  &  &  & Chandel (1971) \cite{Chandel-1971}\\
		  &  & $F_B$ & Chandel (1971) \cite{Chandel-1971}\\
		  &  & $F_C$ & Chandel (1971) \cite{Chandel-1971}\\
		  &  & $F_D$ & Koschmieder (1962) \cite{Koschmieder-1962}\\
		  &  &       & Manocha-Sharma (1969) \cite{Manocha-1969}\\
		  &  &       & Chandel (1971) \cite{Chandel-1971}\\
		  &  &       & Khudozhnikov (2003) \cite{Khudozhnikov-2003}\\
		& Saran & $F_M$ & Manocha-Sharma (1969) \cite{Manocha-1969}\\
		&  & $F_K$ & Luo-Raina (2021) \cite{Luo-Raina-2021} \\
		&  &  &Luo-Xu-Raina (2022) \cite{Luo-Xu-Raina-2022}\\
		& $q$-analogues &  & \emph{the present paper}\\
		& others &  & Volkodavov-Nikolaev (1993) \cite{Volkodavov-Nikolaev-1993}\\
		\bottomrule
	\end{tabular}
\end{center}

\section*{Author Contributions}

All the authors have contributed equally. All authors have read and approved the final manuscript.

\section*{Funding}

The research of the second author is supported by National Natural Science Foundation of China (Grant No. 12001095).

\section*{Data Availability}

This manuscript has no associated data.

\section*{Acknowledgements}

We are grateful to Peng-Cheng Hang for his careful reading of the initial manuscript, for correcting several typos, and for providing valuable suggestions.

\section*{Conflicts of interest}

The authors declare that there is no conflict of interest.

\end{document}